\author{Matthew D. Kvalheim}
\author{Philip Arathoon}
\address{Department of Mathematics and Statistics, University of Maryland, Baltimore County, Baltimore, MD, USA}
\address{Division of Mathematics, Analytics, Science, and Technology, Babson College, Wellesley, MA}
\email{kvalheim@umbc.edu, parathoon@babson.edu}
\title[Linearizability of flows by embeddings]{Linearizability of flows by embeddings}
\subjclass[2020]{Primary 37C15; Secondary 37C79, 37C81, 37C70}
\newcommand{\concept}[1]{\textbf{#1}}
\newcommand{\N}{\mathbb{N}}
\newcommand{\Z}{\mathbb{Z}}
\newcommand{\R}{\mathbb{R}}
\newcommand{\C}{\mathbb{C}}
\newcommand{\st}{X}
\newcommand{\att}{A}
\newcommand{\T}{T}
\newcommand{\id}{\textnormal{id}}
\theoremstyle{definition}
\newtheorem{Lem}{Lemma}
\newtheorem{Th}{Theorem}
\newtheorem{Co}{Corollary}
\newtheorem*{Quest-non}{Question}
\newtheorem*{Th-non}{Theorem}
\newcommand{\thistheoremname}{}
\newtheorem*{genericthm}{\thistheoremname}
{\renewcommand{\thistheoremname}{Theorem~\ref{#1}$'$}%
	\begin{genericthm}}
	{\end{genericthm}}
\newtheorem*{Def*}{Definition}
\newtheorem{Ex}{Example}
\newtheorem{Rem}{Remark}
\begin{document}
	
	\begin{abstract}
    We consider the problem of determining the class of continuous-time dynamical systems that can be globally linearized in the sense of admitting an embedding into a linear system on a 
    higher-dimensional Euclidean space.
    We solve this problem for dynamical systems on connected  state spaces that are either compact or contain at least one nonempty compact attractor, obtaining necessary and sufficient conditions for the existence of linearizing $C^k$ embeddings for $k\in \mathbb{N}_{\geq 0}\cup \{\infty\}$.
    Corollaries include (i) several checkable necessary conditions for global linearizability and (ii) extensions of the Hartman-Grobman and Floquet normal form theorems beyond the classical settings. 
    Our results open new perspectives on linearizability by establishing relationships to symmetry, topology, and invariant manifold theory.
	\end{abstract}
\maketitle

\tableofcontents

\section{Introduction}\label{sec:intro}
Consider a nonlinear system of ordinary differential equations
\begin{equation}\label{eq:ode}
\dot{x} = \frac{d}{dt}x = f(x),
\end{equation}
where $f$ is a vector field generating a continuous-time dynamical system or \emph{flow} $\Phi\colon \R\times M\to M$ on a manifold $M$, so that $t\mapsto \Phi^t(x)$ is the solution of \eqref{eq:ode} with initial condition $x = \Phi^0(x)$.
We say that a map $F\colon M\to \R^n$ is \concept{linearizing} if it is equivariant with respect to some linear flow on $\R^n$, i.e., if there is a matrix $B\in\R^{n\times n}$ such that
\begin{equation}\label{eq:lin-emb-intro}
	F\circ \Phi^t = e^{Bt}\circ F
\end{equation}
for all $t\in \R$.
In other words, $y(t)\coloneqq F(\Phi^t(x))$ solves the \emph{linear} system of equations
\begin{equation*}
	\dot{y}=By.
\end{equation*}
In this paper we study the existence of linearizing maps that are also $C^k$ embeddings, where $k\in \N_{\geq 0}\cup\{\infty\}$.\footnote{Standard definitions: a map is $C^0$ if it is continuous, and  is $C^k$ with $k\in \N_{\geq 1}\cup \{\infty\}$ if it is $C^0$ and has $C^0$ partial derivatives of all orders less than $k+1$ when expressed in local coordinates.
A \concept{$C^0$ embedding} (or \concept{topological embedding}) is a homeomorphism onto its image in the subspace topology \cite[p.~54]{lee2011topological}.
An \concept{immersion} is a $C^1$ map with an injective derivative at each point.
A \concept{$C^{k}$ embedding} with $k\in \N_{\geq 1}\cup \{\infty\}$ is a $C^k$ map that is both an immersion and a $C^0$ embedding \cite[p.~21]{hirsch1976difftop}.
Equivalently, a $C^{k\geq 1}$ map is a $C^k$ embedding if its image is a $C^k$ embedded submanifold and the map is a diffeomorphism from its domain to its image.
\label{foot:embedding}}
Note that $n\geq \dim M$ if $F\colon M\to \R^n$ is a $C^0$ embedding, i.e., embeddings always increase or preserve dimension.

\begin{Rem}\label{rem:complex-vs-real}
	One can define linearizing maps and $C^k$ embeddings $F\colon M\to \C^n$ in exactly the same way.
	But since $\R^n\subset \C^n$ and $\C^n$ is isomorphic to $\R^{2n}$ as a real vector space, a linearizing $C^k$ embedding $M\to \C^{n_1}$ exists for some $n_1$ if and only if a linearizing $C^k$ embedding $M\to \R^{n_2}$ exists for some $n_2$. 
	Thus, while we choose to emphasize $\R^n$-valued linearizing embeddings, the equivalent $\C^n$-valued viewpoint should be kept in mind.
	In particular, this viewpoint is useful for constructing linearizing embeddings given by concatenating eigenfunctions $M\to \C$ of the group of Koopman operators \cite{koopman1931hamiltonian,budisic2012appliedkoopmanism,mezic2020spectrum,mezic2021koopman,brunton2022modern}.
\end{Rem}

Conceptually, linearizing embeddings identify nonlinear systems with invariant subsets of linear systems.
We ask the fundamental

\begin{Quest-non}
	When do linearizing embeddings of a nonlinear system \eqref{eq:ode} exist?
\end{Quest-non}

We answer this question for the class of nonlinear systems on connected  state spaces that are either compact or contain at least one nonempty compact attractor, obtaining necessary and sufficient conditions.
(Our main results do not actually require a connected state space, but the preceding sentence is a convenient summary.)

These necessary and sufficient conditions will be illustrated  in \S \ref{sec:illustration} after discussing related work.
For now, we place them in the broader context of ``universality'' introduced by Tao to study the global existence problem for the Euler and Navier-Stokes equations \cite{tao2017universality,tao2018universalityi,tao2020universalityii}.
Slightly generalizing definitions of Tao \cite[p.~220]{tao2017universality} and Cardona and Presas \cite[pp.~3--4]{cardona2024hprinciple}, we say that a class $\mathcal{C}$ of flows is \concept{$C^k$-universal} with respect to another such class $\mathcal{D}$ if any flow in $\mathcal{D}$ admits an (equivariant) $C^k$ embedding into some flow in $\mathcal{C}$ (realizing the former as an invariant subset of the latter).
If we fix $$\mathcal{C}=\{\text{linear flows on finite-dimensional Euclidean spaces}\},$$ our existence question can be restated equivalently as follows: what is the largest class of flows  $\mathcal{D}$ with respect to which  $\mathcal{C}$ is $C^k$-universal?
We answer this question under the additional constraint that $\mathcal{D}\subset \mathcal{E}$, where $\mathcal{E}$ consists of flows on connected  spaces that are either compact or contain at least one nonempty compact attractor.

\subsection{Related work}\label{sec:related-work}
Poincar\'{e} initiated the study of \emph{locally}-defined, dimension-\emph{preserving} linearizing embeddings (homeomorphisms and diffeomorphisms) near an equilibrium point \cite{poincare1879proprietes, poincare1880oeuvres}, leading to classical linearization theorems including those of Poincar\'{e}-Siegel \cite{poincare1879proprietes,siegel1942iteration,siegel1952uber}, Sternberg \cite{sternberg1957local,sternberg1958structure}, Hartman-Grobman \cite{hartman1960lemma,grobman1959homeomorphism} and Hartman \cite{hartman1960local}. 
Extensions to locally-defined, dimension-preserving, \emph{partial} linearizations (in transverse directions) near general invariant manifolds were given in theorems including those of Pugh and Shub \cite{pugh1970linearization}, Robinson \cite{robinson1971differentiable}, Takens \cite{takens1971partiallyhyp}, Sell \cite{sell1983vector, sell1983linearization}, and Sakamoto \cite{sakamoto1994smooth}.
Such linearization results were extended to \emph{global} ones valid on the basin of attraction of a stable hyperbolic equilibrium or limit cycle by Lan and Mezi\'{c} \cite{lan2013linearization}
and, more generally, on the basin of attraction of a stable normally hyperbolic invariant manifold without boundary by Mezi\'{c} \cite{mezic_book} and with inflowing boundary by Eldering, Kvalheim, and Revzen \cite{eldering2018global}.

In this paper we are interested in \emph{globally}-defined, \textit{totally} (not partially) linearizing $C^k$ embeddings that are possibly dimension-\emph{increasing}.
Linearizing $C^k$ embeddings are conceptually natural objects of study, since they are precisely the maps $C^k$-identifying nonlinear systems with invariant subsets of linear systems.
However, previous literature has focused on related but different classes of maps. 

To our knowledge, Carleman was the first to study linearizing embeddings without the dimension-preserving assumption \cite{carleman1932application,kowalski1991nonlinear}.
Both Carleman and recent control theory literature consider $C^{k\geq 1}$ embeddings $F\colon M\to \R^n$ satisfying the extra requirements that $M=\R^m$ and $F\colon \R^m\to \R^n$ has a linear left inverse $\R^n\to \R^m$, while only satisfying a weaker ``linearizing'' property than we require.
Namely, the linear left inverse is merely required to send linear trajectories starting in the image of $F$ to nonlinear trajectories in $M$, with the image of $F$ not necessarily invariant under the linear flow.
Belabbas, Chen, and Ko dubbed such embeddings $F$ ``super-linearizations'' and defined ``strong super-linearizations'' to be super-linearizations that are also linearizing in our sense  \cite{belabbas2022canonical,belabbas2022visible,belabbas2023sufficient, ko2024minimum}.
One necessary and sufficient condition for super-linearizability of a $C^\infty$ flow $\Phi$ on $\R^m$ was obtained by Claude, Fliess, and Isidori \cite{claude1983immersion}.\footnote{They showed that a $C^\infty$ flow $\Phi$ on $\R^m$ is super-linearizable in this sense if and only if $\text{span}\{x\mapsto \frac{d^j}{dt^j}\Phi^t(x)|_{t=0}\colon j\in \N\}$ is a finite-dimensional linear subspace of the real vector space of $C^\infty$ maps $\R^m\to \R^m$ \cite{claude1983immersion} (see also Levine and Marino \cite[Lem.~1]{levine1986nonlinear}).}
Existence of a $C^k$ strong super-linearization for a flow $\Phi$ on $\R^m$ is also equivalent to its group of Koopman operators having a finite-dimensional invariant subspace of $C^k$ functions containing the state coordinate projections $(x_1,\ldots, x_m)\mapsto x_i$, a situation of interest in the recent ``applied Koopman operator'' literature \cite{brunton2016koopman}.
Super-linearizations are also related to ``polynomial flows'' \cite[p.~671]{essen1994locally}.

On the other hand, Liu, Ozay, and Sontag \cite{liu2023non,liu2025properties} recently obtained a necessary condition for existence of linearizing injective $C^0$ maps, called ``one-to-one linear immersions'' therein.
These are equivalent to $C ^0$ embeddings in situations considered in this paper (see Remark~\ref{rem:compare}).

Previously, Mezi\'{c} \cite{mezic2021koopman} introduced a necessary condition 
and a sufficient condition  
for existence of linearizing injective maps, called ``faithful linear representations'' therein.
These conditions involve properties of the Koopman operator acting on spaces of $L^2(\mu)$ functions, where $\mu$ is a suitable measure, and the components of the linearizing injective maps are assumed to be in  $L^2(\mu)$ but not necessarily continuous.
Thus, the sufficient condition \cite[Prop.~38]{mezic2021koopman} is not sufficient for linearizability by a $C^{0}$ embedding, but the necessary condition \cite[Cor.~33]{mezic2021koopman} is also necessary for linearizability by a $C^{0}$ embedding if the state space is compact (or is replaced by a compact invariant set) and $\mu$ is a finite Borel measure.

\subsection{Illustration of results}\label{sec:illustration}
We now illustrate our results on \eqref{eq:ode}, though some apply more generally.

One obstruction to the existence of a linearizing $C^0$ embedding, if $M$ is connected, occurs if \eqref{eq:ode} has a compact attractor whose basin is not all of $M$ (Corollary~\ref{co:motivate}).
Hence we are led to study the existence of linearizing embeddings defined on attractor basins or other subsets, i.e., linearizing embeddings of $(\st,\Phi)$ where $\st\subset M$ is a subset invariant under $\Phi$ (by abuse of notation we still write $\Phi$ for $\Phi|_{\R\times \st}$).
Such a linearizing embedding $F\colon \st\to \R^n$ satisfies \eqref{eq:lin-emb-intro} but is defined on $\st$ rather than $M$.

Our results are divided among four cases: linearization by a $C^{k\geq 1}$ embedding when $\st$ is an attractor basin or a compact invariant \emph{manifold}, and linearization by a $C^0$ embedding when $\st$ is an attractor basin or a compact invariant \emph{set}.

Our first set of results (\S\ref{subsec:cpct-smooth-case}) concern the case that $\st$ is a compact $C^{k\geq 1}$ invariant manifold.
(This includes the case $\st=M$ if $M$ is compact.)
Our main result for this case is Theorem~\ref{th:smooth-compact}, which asserts that $(\st,\Phi)$ is linearizable by a $C^k$ embedding if and only if $\Phi$ is a ``$1$-parameter subgroup'' (\S \ref{sec:results}) of a $C^k$ Lie group action of a torus on $\st$.
Examples include the familiar cases that $\st$ is an equilibrium, periodic orbit, or quasiperiodic invariant torus, but we give other examples in which $\st$ is a sphere, Klein bottle, or real projective plane with isolated equilibria (Examples~\ref{ex:basic-tori}, \ref{ex:sphere}, \ref{ex:klein}, \ref{ex:rp2}).
In fact, there seems to be confusion in the literature regarding linearizability by embeddings in the presence of isolated equilibria, so we also give related \emph{necessary} conditions for linearizability of $(\st,\Phi)$ in Proposition~\ref{prop:hopf} and its corollaries.
For example, if $\st$ is odd-dimensional and $\Phi$ has at least one isolated equilibrium, then $(\st,\Phi)$ is not linearizable by a $C^1$ embedding (Corollary~\ref{co:odd-isolated-no-embed}), and if $\dim \st=2$ and all equilibria of $\Phi$ are isolated, then $(\st,\Phi)$ cannot be linearized by a $C^1$ embedding unless $\st$ is diffeomorphic to either the $2$-torus, the $2$-sphere, the Klein bottle, or the real projective plane (Corollary~\ref{co:surfaces}).
We also give a separate \emph{sufficient} condition for linearizability of $(\st,\Phi)$ by a $C^k$ embedding in Proposition~\ref{prop:recog}.

Our second main result (Theorem~\ref{th:cont-compact}) concerns the case that $\st$ is a compact invariant set (not assumed to be a manifold).
It asserts that $(\st,\Phi)$ is linearizable by a $C^0$ embedding if and only if $\Phi$ is a $1$-parameter subgroup of a $C^0$ torus action that is not too pathological (has finitely many ``orbit types''). 
For example, all flows shown in Figure~\ref{fig:c0-linearizable} are linearizable by $C^0$ embeddings.

Our third main result (Theorem~\ref{th:cont}) concerns the case that $\st$ is the basin of attraction of an asymptotically stable compact invariant set $\att\subset M$.\footnote{Actually, $\att$ need only be globally asymptotically stable \emph{within} an invariant set $\st\subset M$, as long as $\st$ is ``locally closed'' in $M$ (see Remark~\ref{rem:top-assump-comment}).}
It asserts that $(\st,\Phi)$ is linearizable by a $C^0$ embedding if and only if $(\att,\Phi)$ is linearizable by a $C^0$ embedding (cf. Theorem~\ref{th:cont-compact}) and $\att$ has  $C^0$ ``asymptotic phase'' for $\Phi$.
Asymptotic phase as defined in \S \ref{subsec:cont-att-case} generalizes that associated with normally hyperbolic invariant manifolds \cite{fenichel1974asymp,hirsch1977invariant,bronstein1994smooth,eldering2018global}, which in turn  generalizes that associated with hyperbolic periodic orbits \cite{hale1980ordinary,guckenheimer1974isochrons,winfree2001geom}.
Theorem~\ref{th:cont} further asserts that linearizing $C^0$ embeddings (or, more generally, injective $C^0$ maps) are always \emph{proper} and hence limit to $\infty$ near the basin boundary $\partial \st$ (cf. \cite[Prop.~1]{haller2024data}), 
leading to the aforementioned statement (Corollary~\ref{co:motivate}) that $(M,\Phi)$ is not linearizable by a $C^0$ embedding if $M$ is connected and there is a non-global compact attractor.
Other corollaries of Theorem~\ref{th:cont} are extensions of the Hartman-Grobman and Floquet normal form theorems, at the cost of allowing ``extra'' linearizing coordinates (Corollaries~\ref{co:hartman-grobman}, \ref{co:floquet}).

Our fourth and final main result (Theorem~\ref{th:smooth}) again concerns the case that  $\st$ is the basin of attraction of an asymptotically stable compact invariant set $\att\subset M$, but instead concerns linearizability by a $C^{k\geq 1}$ embedding.
It asserts the following.
If $(\st,\Phi)$ is linearizable by a $C^k$ embedding, then $\att\subset \st$ must in fact be a $C^k$ embedded submanifold, 
$(\att,\Phi)$ must be  linearizable by a $C^k$ embedding (cf. Theorem~\ref{th:smooth-compact}), and $\att$ must have $C^k$ asymptotic phase.
Conversely, $(\st,\Phi)$ is linearizable by a $C^k$ embedding if $\att, \st, \Phi$ have the preceding properties, $\Phi$ is the flow of a $C^k$ vector field (cf. Remark~\ref{rem:uniq-int}), and $\Phi$ is ``transversely linearizable'' near $\att$ in a sense related to normal hyperbolicity (cf. Remarks~\ref{rem:nhim}, \ref{rem:about-cond-3}).

The remainder of the paper is organized as follows.
Our general results and examples are in \S\ref{sec:results}.
Proofs are in \S\ref{sec:proof}.
Closing remarks are  in \S\ref{sec:conclusion}.

\section{Results and examples}\label{sec:results}
This section contains our results and examples.
The $C^{k\geq 1}$ compact case is treated in \S\ref{subsec:cpct-smooth-case}, the $C^0$ compact case in \S\ref{subsec:cpct-continuous-case}, the $C^0$ attractor basin case in \S\ref{subsec:cont-att-case}, and the $C^{k\geq 1}$ attractor basin case in \S\ref{subsec:smooth-att-case}.
The main results (Theorems~\ref{th:smooth-compact}, \ref{th:cont-compact}, \ref{th:cont}, \ref{th:smooth}) are necessary and sufficient conditions for linearizing embeddability.
Other results include necessary conditions (Proposition~\ref{prop:hopf}, Corollaries~\ref{co:odd-isolated-no-embed}, \ref{co:euler}, \ref{co:surfaces}) and one sufficient condition (Proposition~\ref{prop:recog}) for linearizability in the $C^{k\geq 1}$ compact case, a necessary condition in the $C^0$ attractor basin case (Corollary~\ref{co:motivate}), and extensions of the Hartman-Grobman and Floquet normal form theorems (Corollaries~\ref{co:hartman-grobman}, \ref{co:floquet}).
The proofs of Theorems~\ref{th:smooth-compact}, \ref{th:cont-compact}, \ref{th:cont}, \ref{th:smooth} and Propositions~\ref{prop:hopf}, \ref{prop:recog} are deferred to \S\ref{sec:proof}.

A \concept{flow} on $\st$ is a group action $\Phi\colon \R\times \st\to \st$ of $\R$ on $\st$.
If $\Phi$ is a flow on $\st$, we say that a map $F\colon \st\to \R^n$ is \concept{linearizing} if there is $B\in \R^{n\times n}$ such that $F\circ \Phi^t = e^{Bt}\circ F$ for all $t\in \R$.
We say that $\Phi$ is a \concept{1-parameter subgroup} of a group action $\Theta\colon H\times \st\to \st$ of a torus $H= T^\ell = \R^\ell/\Z^\ell$  on $\st$ (a \concept{torus action}) if there is $\boldsymbol \omega\in \R^\ell$  such that $\Phi^t=\Theta^{(\boldsymbol \omega t \mod 1)}$, where  ``$\text{mod } 1$'' is applied entrywise.

\subsection{The smooth compact case}\label{subsec:cpct-smooth-case}
Here is the first main result.
\begin{Th}\label{th:smooth-compact}
Fix $k\in \N_{\geq 1}\cup \{\infty\}$.
Let $\Phi$ be a $C^k$ flow on a compact $C^k$ manifold $\st$.
Then $(X,\Phi)$ is linearizable by a $C^k$ embedding if and only if $\Phi$ is  a $1$-parameter subgroup of a $C^k$ torus action.
\end{Th}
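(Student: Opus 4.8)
The plan is to prove both implications, dividing the substantive work between an appeal to the equivariant embedding theorem (for sufficiency) and a piece of linear algebra on compact invariant sets of linear flows (for necessity).

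For the ``if'' direction, suppose $\Phi$ is a $1$-parameter subgroup of a smooth torus action $\Theta\colon H\times \st\to \st$, so that $\Phi^t = \Theta^{(\boldsymbol\omega t \mod 1)}$ for some $\boldsymbol\omega\in\R^n$. Since $H$ is a compact Lie group acting smoothly on the compact manifold $\st$, the Mostow--Palais equivariant embedding theorem provides a finite-dimensional orthogonal representation $\rho\colon H\to \mathrm{GL}(V)$ together with a smooth $H$-equivariant embedding $F\colon \st\to V\cong \R^n$, meaning $F\circ\Theta^h = \rho(h)\circ F$. Applying this with $h = (\boldsymbol\omega t \mod 1)$ gives $F\circ\Phi^t = \rho(\boldsymbol\omega t\mod 1)\circ F$. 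The map $t\mapsto \rho(\boldsymbol\omega t\mod 1)$ is a continuous one-parameter subgroup of $\mathrm{GL}(V)$, hence equals $e^{Bt}$ for the real matrix $B = d\rho(\boldsymbol\omega)$; this is exactly the linearizing condition \eqref{eq:lin-emb-intro}, so $F$ is a linearizing smooth embedding.

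For the ``only if'' direction, suppose $F\colon \st\to\R^n$ is a linearizing smooth embedding satisfying \eqref{eq:lin-emb-intro}. Then $K := F(\st)$ is a compact set invariant under the linear flow $e^{Bt}$, and I pass to the span $V := \mathrm{span}(K)$, which is $e^{Bt}$-invariant and which $K$ spans. The first key step is to show that $B|_V$ is semisimple with purely imaginary spectrum. Decomposing $V = V_s\oplus V_c\oplus V_u$ into the stable, center, and unstable subspaces of $B|_V$, invariance of $K$ together with boundedness of both forward and backward orbits forces the projections of $K$ onto $V_u$ and onto $V_s$ to vanish; since $K$ spans $V$ this gives $V_u = V_s = 0$, so the spectrum is purely imaginary. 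Writing the Jordan decomposition $B|_V = S + N$ with $S$ semisimple and $N$ nilpotent, the set of points with bounded orbit is exactly $\ker N$; as $K$ consists of such points and spans $V$, I conclude $N = 0$. Consequently there is an inner product on $V$ making $B|_V$ skew-symmetric, so $\{e^{Bt}|_V\}$ is a one-parameter subgroup of the compact group $\mathrm{O}(V)$. Its closure $H := \overline{\{e^{Bt}|_V\}}\subset \mathrm{O}(V)$ is a compact connected abelian Lie group, i.e.\ a torus, and it preserves the closed invariant set $K$ by continuity. The linear, hence smooth, action of $H$ on $V$ therefore restricts to a smooth action on the submanifold $K$, and transporting it through the diffeomorphism $F\colon \st\to K$ yields a smooth torus action $\Theta$ on $\st$ of which $\Phi$ is a one-parameter subgroup, with frequency vector $\boldsymbol\omega$ read off from the generator of $t\mapsto e^{Bt}|_V$ inside $H$.

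I expect the main obstacle to be the necessity direction's spectral analysis: one must argue carefully that full-span compactness of $K$ rules out both exponential growth (forcing imaginary spectrum) and polynomial growth (forcing semisimplicity), and then verify that the closure of the resulting one-parameter subgroup is genuinely a torus acting smoothly on $K$. The sufficiency direction is comparatively routine once the equivariant embedding theorem is invoked, the only delicate point being the identification of the induced one-parameter subgroup of $\mathrm{GL}(V)$ with a matrix exponential $e^{Bt}$ having real $B$.
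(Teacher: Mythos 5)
Your proposal is correct and follows essentially the same route as the paper: sufficiency via the Mostow--Palais equivariant embedding theorem exactly as in the paper, and necessity by showing the relevant part of $B$ is semisimple with purely imaginary spectrum and then taking the closure of the one-parameter group $\{e^{Bt}\}$ inside a compact group to obtain the torus acting on the image of the embedding. The only differences are presentational: you carry out the spectral argument (stable/center/unstable splitting plus the Jordan--Chevalley decomposition $S+N$) that the paper compresses into a citation of the Jordan normal form theorem, and you take the closure in $\mathrm{O}(V)$ rather than diagonalizing over $\C$ and closing up $\boldsymbol\omega\R \bmod 1$ inside the standard torus $T^n$ acting on $\C^n$.
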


\begin{Rem}
With the aid of Theorem~\ref{th:smooth-compact}, the class of linearizable systems $(\st,\Phi)$ as in its statement can be seen to be a subclass of the class of ``non-Hamiltonian integrable systems'' from the literature \cite[Def.~2.6]{zung2006torus}.
\end{Rem}

The following examples illustrate Theorem~\ref{th:smooth-compact}.
\begin{Ex}\label{ex:zero-vf}
Let $\st$ be any compact $C^\infty$ manifold and $\Phi$ be the flow of the zero vector field.
By the Whitney embedding theorem, $(\st,\Phi)$ admits a $C^\infty$ embedding into the linear flow of the zero vector field on some Euclidean space.
Thus, $(\st,\Phi)$ is linearizable by a $C^\infty$ embedding.
Note that $\Phi$ coincides with any 1-parameter subgroup of the trivial action of any torus.
\end{Ex}

The remaining examples also serve to motivate Proposition~\ref{prop:hopf} and its corollaries.

\begin{Ex}\label{ex:basic-tori}
Suppose $(\st,\Phi)$ is either (i) a single equilibrium point, (ii) a single periodic orbit viewed as a constant-speed rotation on the circle $\st = S^1$, or (iii) a single quasiperiodic torus viewed as a product of constant-speed rotations on the ($n$-)torus $\st = T^n = S^1\times\cdots\times S^1$, a product of circles.
It is clear (and well-known) that all of these cases are linearizable by $C^\infty$ embeddings.
Let us compare this fact with Theorem~\ref{th:smooth-compact}.
In all cases, $(\st,\Phi)$ is  a $1$-parameter subgroup of a $C^\infty$ torus action on $\st$: the actions $\Theta$ can be respectively taken to be  (i) the trivial action of $T^1$ on the equilibrium, (ii) the action of $S^1=T^1$ on itself, or (iii) the action of $T^n$ on itself. 
\end{Ex}

Despite contrary claims in the literature,
the following example shows that it is possible for $(\st,\Phi)$ in Theorem~\ref{th:smooth-compact} to be linearizable by a $C^\infty$ embedding even if $(\st,\Phi)$ has multiple isolated equilibrium points.

\begin{Ex}\label{ex:sphere}
Let $\st = S^2\subset \C\times \R\approx \R^3$ be the unit $2$-sphere centered at the origin.
Let $\Theta$ be the $C^\infty$ action of the circle $S^1\approx \R/\Z$ on $\C\times \R$ defined by $\Theta^h(z,s)\coloneqq (e^{2\pi i h}z, s)$.
This action leaves $S^2$ invariant because $\Theta$ simply rotates the sphere's latitudinal circles.
Thus, Theorem~\ref{th:smooth-compact} implies that $(\st,\Phi)$ with $\Phi^t\coloneqq \Theta^{t\mod 1}$ is linearizable by a $C^\infty$ embedding.
This conclusion can also be seen directly since $\Phi$ is simply the restriction to $\st\subset \C\times \R$ of the linear flow $(t,z,y)\mapsto (e^{2\pi i t}z,y)$ on $\C\times \R \approx \R^3$.
In anticipation of Proposition~\ref{prop:hopf}, note that $(\st,\Phi)$ has two equilibria, each with Hopf index \cite[pp.~133--134]{guillemin1974differential} equal to $1$.
\end{Ex}

Tori and spheres are orientable.
Here are two examples with nonorientable $\st$.

\begin{Ex}\label{ex:klein}
Let $\st$ be the Klein bottle viewed as the quotient of $T^2 = \R^2/\Z^2$ by the group action of $\Z_2$ generated by $(x,y)\mapsto (x+\frac{1}{2},-y) \mod 1$.
The $C^\infty$ action of $S^1=\R/\Z$ on $\R^2/\Z^2$ by translations of the first factor commutes with the $\Z_2$ action and hence descends to a $C^\infty$ action $\Theta$ of $S^1$ on $\st$.
Thus, by Theorem~\ref{th:smooth-compact}, $(\st,\Phi)$ with $\Phi^t\coloneqq \Theta^{t\mod 1}$ is linearizable by a $C^\infty$ embedding.
In anticipation of Proposition~\ref{prop:hopf}, note that $(\st,\Phi)$ has no equilibria.
\end{Ex}

\begin{Ex}\label{ex:rp2}
Let $\st = \R P^2$ be the real projective plane viewed as the quotient of the $2$-sphere $S^2\subset \C\times \R\approx \R^3$ by the action of $\Z_2$ generated by the antipodal map.
The $C^\infty$ $S^1$ action from Example~\ref{ex:sphere} commutes with the $\Z_2$ action, so it descends to a $C^\infty$ $S^1$ action $\Theta$ on $\st$ having one fixed point.
Thus, by Theorem~\ref{th:smooth-compact}, $(\st,\Phi)$ with $\Phi^t\coloneqq \Theta^{t\mod 1}$ is linearizable by a $C^\infty$ embedding.
In anticipation of Proposition~\ref{prop:hopf}, note that $(\st,\Phi)$ has one equilibrium with Hopf index equal to $1$.
\end{Ex}

Example \ref{ex:zero-vf} showed that any compact $C^\infty$ manifold $\st$ admits \emph{some} flow $\Phi$ that can be linearized by a $C^\infty$ embedding.
However, Examples~\ref{ex:basic-tori},  \ref{ex:sphere}, \ref{ex:klein}, \ref{ex:rp2} motivate the following proposition and corollaries restricting linearizability of flows having isolated equilibria, i.e., flows generated by vector fields with only isolated zeros.
(The proof of Proposition~\ref{prop:hopf} uses Theorem~\ref{th:smooth-compact}.)

\begin{restatable}[]{Prop}{PropHopf}\label{prop:hopf}
Let $\Phi$ be a $C^1$ flow on a connected compact $C^1$ manifold $\st$.
Assume that $(\st,\Phi)$ can be linearized by a $C^1$ embedding and that $\Phi$ has at least one isolated equilibrium.
Then $\st$ is even-dimensional, and the Hopf index of the vector field generating $\Phi$ at any isolated equilibrium is equal to $1$.
\end{restatable}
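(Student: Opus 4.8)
The plan is to invoke Theorem~\ref{th:smooth-compact} to reduce the statement to a local computation of the linearization of the generating vector field at the equilibrium, after equipping $\st$ with a suitable invariant metric. Write $f$ for the vector field generating $\Phi$ and let $x$ be an isolated equilibrium, so $f(x)=0$. Since the sign of the determinant of the linearization controls the index at a nondegenerate zero, the whole proposition will follow once I show that $A\coloneqq Df(x)\colon T_x\st\to T_x\st$ is invertible and skew-symmetric with respect to some inner product. Indeed, invertibility together with skew-symmetry forces $\dim T_x\st$ to be even, since a skew-symmetric operator on an odd-dimensional space is singular; hence $\st$ is even-dimensional. Moreover, for invertible skew-symmetric $A$ the eigenvalues occur in conjugate pairs $\pm i\lambda_j$ with $\lambda_j\neq 0$, so $\det A=\prod_j\lambda_j^2>0$ and the index $\operatorname{sign}\det A$ equals $1$.

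First I would produce the invariant metric. By Theorem~\ref{th:smooth-compact}, $\Phi$ is a $1$-parameter subgroup of a smooth torus action $\Theta\colon H\times\st\to\st$, so the closure $G\coloneqq\cl\{\Phi^t:t\in\R\}\subseteq H$ is a compact subgroup and each $\Phi^t$ lies in $G$. Averaging an arbitrary Riemannian metric over $G$ against Haar measure yields a $G$-invariant, and in particular $\Phi$-invariant, metric $g$. Thus each $\Phi^t$ is an isometry of $(\st,g)$ and $f$ is a Killing field, so the Killing equation makes $v\mapsto \nabla_v f$ skew-symmetric with respect to $g$; evaluating at $x$ and using $f(x)=0$ gives that $A=Df(x)=\nabla f|_x$ is $g_x$-skew-symmetric.

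The key remaining step, and the one I expect to be the main obstacle, is to upgrade ``$x$ is isolated'' to ``$A$ is invertible,'' since a priori an isolated zero could be degenerate with higher-order terms accounting for the isolation. Here I would exploit the fact that isometries commute with the Riemannian exponential map: since each $\Phi^t$ fixes $x$, one has $\Phi^t(\exp_x\xi)=\exp_x(e^{At}\xi)$ for $\xi$ in a small ball about $0\in T_x\st$, and differentiating at $t=0$ gives $f(\exp_x\xi)=D(\exp_x)_\xi(A\xi)$. Because $\exp_x$ is a local diffeomorphism near $0$, the zeros of $f$ near $x$ correspond exactly to the $\xi$ near $0$ with $A\xi=0$, i.e.\ to $\ker A$. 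Hence $x$ isolated forces $\ker A=\{0\}$, so $A$ is invertible. (Bochner's linearization theorem for the compact group $G$ would yield the same conclusion.)

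Combining the three ingredients then completes the argument: $A$ is invertible and $g_x$-skew-symmetric, so $\dim T_x\st=\dim\st$ is even; and the Poincar\'{e}--Hopf index of $f$ at the nondegenerate zero $x$ is $\operatorname{sign}\det A=+1$ by the determinant computation of the first paragraph. The only genuine subtlety is the nondegeneracy step, where equivariance of $\exp_x$ (equivalently, local linearizability of the torus action near its fixed point) is precisely what prevents an isolated zero from being degenerate.
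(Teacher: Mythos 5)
Your proof is correct and follows essentially the same route as the paper: both pass through Theorem~\ref{th:smooth-compact}, take the compact closure of the one-parameter group inside the torus, deduce that the linearization $A=Df(x)$ at an isolated equilibrium is skew-symmetric and invertible, and finish with the same linear algebra (an invertible skew-symmetric operator forces even dimension and has positive determinant, hence index $1$). The only difference is that the paper cites Bochner's linearization theorem to put the action in orthogonal linear form near $x$, whereas you re-derive exactly the special case needed by averaging a Riemannian metric over the compact group and using equivariance of the Riemannian exponential map to identify the zeros of $f$ near $x$ with $\ker A$ — an equivalent, slightly more self-contained step, as your parenthetical remark acknowledges.
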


\begin{Co}\label{co:odd-isolated-no-embed}
If $\Phi$ is a $C^1$ flow on an odd-dimensional connected compact $C^1$ manifold $\st$ with at least one isolated equilibrium, then $(\st,\Phi)$ is not linearizable by a $C^1$ embedding.
\end{Co}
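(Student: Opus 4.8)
The plan is to obtain this statement as an immediate contrapositive of Proposition~\ref{prop:hopf}, since every hypothesis of the corollary is tailored to supply a hypothesis of that proposition. Concretely, I would argue by contradiction: suppose $(\st,\Phi)$ \emph{can} be linearized by a smooth embedding. Then all the hypotheses of Proposition~\ref{prop:hopf} hold simultaneously — namely, $\st$ is a connected compact smooth manifold, $\Phi$ is a smooth flow on it, $(\st,\Phi)$ is linearizable by a smooth embedding, and $\Phi$ has at least one isolated equilibrium. Applying Proposition~\ref{prop:hopf} then forces $\st$ to be even-dimensional, contradicting the standing assumption that $\st$ is odd-dimensional. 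This contradiction shows that no linearizing smooth embedding can exist.

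The first step is thus simply to check that the hypotheses match: connectedness and compactness of $\st$, smoothness of $\Phi$, and the existence of an isolated equilibrium are all assumed verbatim in the corollary, so nothing needs to be verified beyond reading off the statements. The second and only real step is to invoke the parity half of the proposition's conclusion; the Hopf-index half is not needed here, so I would not reference it.

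I do not expect a genuine obstacle, because the entire substantive content resides in Proposition~\ref{prop:hopf} itself — the corollary is a direct logical consequence. The one point I would pause to confirm is that the even-dimensionality conclusion of Proposition~\ref{prop:hopf} relies on no auxiliary hypotheses (such as orientability) that the corollary fails to supply; since connectedness is assumed in both statements and no orientability assumption appears, the implication transfers without modification.
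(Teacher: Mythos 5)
Your proposal is correct and coincides with the paper's own treatment: the paper states Corollary~\ref{co:odd-isolated-no-embed} as an immediate consequence of Proposition~\ref{prop:hopf}, exactly via the contrapositive of its even-dimensionality conclusion, with no additional argument needed. Your check that the proposition requires no auxiliary hypotheses (e.g.\ orientability) beyond those the corollary supplies is the right thing to verify, and it holds.
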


Proposition~\ref{prop:hopf} and the Poincar\'e-Hopf theorem \cite[p.~35]{milnor1965topology} imply the following corollary, which is consistent with Examples~\ref{ex:basic-tori}, \ref{ex:sphere}, \ref{ex:klein}, \ref{ex:rp2}.

\begin{Rem}\label{rem:c1-vs-cinf}
The standard Poincar\'{e}-Hopf theorem  and other results for $C^\infty$ manifolds also apply to $C^1$ manifolds since any $C^1$ manifold has a compatible $C^\infty$ structure \cite[Thm~2.2.9]{hirsch1976difftop}.	
\end{Rem}

\begin{Co}\label{co:euler}
Let $\st$ be a compact $C^1$ manifold.
Assume that there exists a $C^1$ flow $\Phi$ with only finitely many equilibria such that $(\st,\Phi)$ is linearizable by a $C^1$ embedding.
Then the Euler characteristic $\chi(\st) = \#\{\text{equilibria}\}\geq 0$.
\end{Co}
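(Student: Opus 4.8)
The plan is to combine Proposition~\ref{prop:hopf} with the Poincar\'e--Hopf theorem, handling the possibility that $\st$ is disconnected by working one component at a time. First I would observe that since $\Phi$ has only finitely many equilibria, each equilibrium is automatically isolated (a finite subset of a manifold is discrete), so the vector field $f$ generating $\Phi$ has isolated zeros and the Poincar\'e--Hopf theorem applies: the sum of the Hopf indices of $f$ over all of its zeros equals $\chi(\st)$.

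Next I would reduce to the connected case. A compact manifold has finitely many connected components $\st_1,\dots,\st_k$, each of which is compact, open, and closed, hence invariant under $\Phi$ (a trajectory is the continuous image of the connected set $\R$, so it is connected and remains within a single component). Restricting the linearizing smooth embedding $F\colon \st\to\R^n$ to each $\st_i$ yields a linearizing smooth embedding of $(\st_i,\Phi)$ satisfying \eqref{eq:lin-emb-intro} with the same matrix $B$. Using additivity of the Euler characteristic, $\chi(\st)=\sum_{i=1}^{k}\chi(\st_i)$, together with additivity of the equilibrium count, it then suffices to prove $\chi(\st_i)=\#\{\text{equilibria in }\st_i\}$ for each $i$.

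Then I would dispatch each component by cases. If $\st_i$ contains no equilibrium, Poincar\'e--Hopf gives $\chi(\st_i)=0=\#\{\text{equilibria in }\st_i\}$ directly, since the index sum is empty. If $\st_i$ contains at least one equilibrium, then because the equilibria are isolated, Proposition~\ref{prop:hopf} applies to the connected compact manifold $\st_i$ and shows that every isolated equilibrium of $\Phi$ in $\st_i$ has Hopf index equal to $1$; hence Poincar\'e--Hopf gives $\chi(\st_i)=\sum 1=\#\{\text{equilibria in }\st_i\}$. Summing over $i$ yields $\chi(\st)=\#\{\text{equilibria}\}$, which is manifestly $\geq 0$.

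I expect no serious obstacle, since the result is essentially a bookkeeping consequence of the two cited facts. The only point requiring genuine care is the reduction to connected components, because Proposition~\ref{prop:hopf} is stated under a connectedness hypothesis while this corollary is not: one must verify that the restriction of $F$ to a component remains a smooth embedding satisfying the linearizing identity, and that both the Euler characteristic and the equilibrium count are additive over the disjoint union of components. That verification is the substantive content of the argument.
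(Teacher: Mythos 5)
Your proof is correct and follows essentially the same route as the paper, which simply derives Corollary~\ref{co:euler} by combining Proposition~\ref{prop:hopf} with the Poincar\'e--Hopf theorem. The componentwise reduction you carry out (restricting the linearizing embedding to each connected component and treating equilibrium-free components via the empty index sum) is exactly the bookkeeping the paper leaves implicit, and it is handled correctly.
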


The following corollary follows directly from Corollary~\ref{co:euler}, Examples~\ref{ex:basic-tori}, \ref{ex:sphere}, \ref{ex:klein}, \ref{ex:rp2}, and the classification of surfaces \cite[Thm~9.3.5]{hirsch1976difftop}.
\begin{Co}\label{co:surfaces}
Let $\st$ be a $2$-dimensional connected compact $C^\infty$ manifold. 
The following are equivalent:
\begin{itemize}
	\item There exists a $C^1$ flow $\Phi$ on $\st$ with only finitely many equilibria such that $(\st,\Phi)$ is linearizable by a $C^1$ embedding.
	\item There exists a $C^\infty$ flow $\Phi$ on $\st$ with only finitely many equilibria such that $(\st,\Phi)$ is linearizable by a $C^\infty$ embedding.
	\item $\st$ is diffeomorphic to either the $2$-torus, the $2$-sphere, the Klein bottle, or the real projective plane.
\end{itemize}
\end{Co}

Theorem~\ref{th:smooth-compact} gives a necessary \emph{and} sufficient condition for linearizability by a $C^{k\geq 1}$ embedding.
On the other hand,  Proposition~\ref{prop:hopf} and Corollaries~\ref{co:euler}, \ref{co:surfaces} give \emph{necessary} conditions for the same, while the following proposition gives a \emph{sufficient} condition.
Proposition~\ref{prop:recog} is a ``rigidity'' statement about Koopman eigenfunctions, and its proof is independent of our other results.
(Since $\st$ is compact, assuming the existence of $F$ below is readily seen to be equivalent to assuming the existence of $n$  nowhere-zero $C^k$ Koopman eigenfunctions $\st\to \C$ whose  eigenvalues are rationally independent.) 

\begin{restatable}[]{Prop}{PropRecog}\label{prop:recog}
Fix $k\in \N_{\geq 1}\cup \{\infty\}$.
Let $\Phi$ be a $C^k$ flow on a connected compact $n$-dimensional $C^k$ manifold $\st$.
Assume there is a $C^k$ map $F\colon \st \to T^n$ to the $n$-torus $T^n = \R^n/\Z^n$ and a vector $\boldsymbol \omega\in \R^n$ with rationally independent components such that $F\circ \Phi^t(x) = \boldsymbol \omega t + F(x) \mod 1$ for all $x\in \st$, $t\in \R$.
Then there is a $C^k$ diffeomorphic identification $\st\approx T^n$ with respect to which $F$ is induced by an invertible matrix with integer entries and $\Phi$ is the flow of a constant vector field $\tilde{\boldsymbol \omega}\in \R^n$ on $T^n$ with rationally independent components.
In particular, $(\st,\Phi)$ is linearizable by a $C^k$ embedding.
\end{restatable}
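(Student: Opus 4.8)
The plan is to show that the semiconjugating map $F$ is in fact a smooth covering map, and then to identify $(\st,\Phi)$ with a translation flow on a torus. Throughout, write $R^t\colon T^n\to T^n$, $R^t(\theta)=\theta+\boldsymbol\omega t \bmod 1$, for the target flow, so the hypothesis reads $F\circ\Phi^t=R^t\circ F$. Since the components of $\boldsymbol\omega$ are rationally independent, $R$ is minimal (Kronecker--Weyl): its only closed invariant subsets are $\emptyset$ and $T^n$. First I would check that $F$ is surjective: $F(\st)$ is compact, hence closed, and invariant because $R^t(F(\st))=F(\Phi^t(\st))=F(\st)$ (as each $\Phi^t$ is a bijection of $\st$); minimality then forces $F(\st)=T^n$.

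The crux is to show that $F$ is a local diffeomorphism at \emph{every} point. Differentiating the semiconjugacy gives $dF_{\Phi^t(x)}\circ d\Phi^t_x=dR^t\circ dF_x$. Trivializing $TT^n$ by the translation-invariant framing $\partial_{\theta_1},\dots,\partial_{\theta_n}$, the translation $R^t$ has $dR^t=\id$, so $dF_{\Phi^t(x)}\circ d\Phi^t_x=dF_x$; since each $d\Phi^t_x$ is an isomorphism, $\rank dF$ is constant along $\Phi$-orbits. Hence the critical set $C\coloneqq\{x:\rank dF_x<n\}$ is closed (rank is lower semicontinuous) and $\Phi$-invariant. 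By Sard's theorem the regular values of $F$ have full measure, and as $F$ is surjective they are attained, so regular points exist and $C\neq\st$. Now $F(C)$ is compact, $R$-invariant, and of measure zero (Sard again), hence a proper closed invariant subset of $T^n$; minimality forces $F(C)=\emptyset$, and therefore $C=\emptyset$. This combination of Sard's theorem with the dynamical invariance of the critical set and the minimality of the target is the step I expect to be the main obstacle.

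Finally, I would observe that a local diffeomorphism out of the compact space $\st$ is proper, so $F$ is a smooth covering map onto the connected torus $T^n$; hence $\st$ is diffeomorphic to $\R^n/H$ for a finite-index subgroup $H\le \Z^n=\pi_1(T^n)$, with $F$ the natural projection. Because $F$ is a local diffeomorphism and its generator is $F$-related to the constant field $\boldsymbol\omega$, the generator of $\Phi$ is exactly the constant field $\boldsymbol\omega$ descended to $\R^n/H$. Choosing $M\in GL_n(\R)$ with $M(H)=\Z^n$ yields a diffeomorphism $\R^n/H\to T^n$ conjugating $\Phi$ to the constant flow of $\tilde{\boldsymbol\omega}\coloneqq M\boldsymbol\omega$; since $H\le\Z^n$ forces $M^{-1}\in \Z^{n\times n}$ and hence $M$ rational, rational independence of $\boldsymbol\omega$ passes to $\tilde{\boldsymbol\omega}$ (if $q\cdot\tilde{\boldsymbol\omega}=0$ with $q\in\Z^n\setminus\{0\}$, then $(M^{T}q)\cdot\boldsymbol\omega=0$ with $M^{T}q$ a nonzero rational vector, a contradiction). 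This would prove $(\st,\Phi)$ smoothly conjugate to a constant flow with rationally independent frequencies on $T^n$; linearizability by a smooth embedding then follows from Example~\ref{ex:basic-tori}(iii), or from Theorem~\ref{th:smooth-compact} since such a flow is a $1$-parameter subgroup of the torus action of $T^n$ on itself.
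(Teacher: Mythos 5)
Your proof is correct and follows essentially the same route as the paper's: Sard's theorem combined with minimality of the irrational rotation and dynamical invariance (you use the critical set, the paper uses the set of regular values) shows $F$ is a local diffeomorphism, hence a covering map, and the classification of coverings of $T^n$ finishes the argument. The only cosmetic differences are that the paper invokes Ehresmann's lemma where you use properness of maps from compact spaces, and it packages your explicit lattice/rational-matrix computation as the statement that the $F$-induced Lie algebra homomorphism is multiplication by an invertible integer matrix.
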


\subsection{The continuous compact case}
\label{subsec:cpct-continuous-case}

Theorem~\ref{th:smooth-compact} can be applied to flows on compact manifolds, or to the restriction to a compact invariant manifold of a flow on an ambient manifold.
However, compact invariant sets of smooth flows on manifolds are not generally manifolds.
Thus, it is useful to have the following Theorem~\ref{th:cont-compact}, a ``continuous'' version of the ``smooth'' Theorem~\ref{th:smooth-compact}, for application to general compact invariant sets (and for application in the abstract setting of topological dynamics).

A $C^0$ action of a torus $\Theta\colon T\times \st\to \st$ on a topological space $\st$ has \concept{finitely many orbit types} if there are only finitely many subgroups $H\subset T$ with the property that $H=\{g\in T\colon \Theta^g(x)=x\}$ for some $x\in \st$ \cite[Def.~1.2.1]{palais1960classification}.

\begin{Th}\label{th:cont-compact}
Let $\st$ be a compact topological space homeomorphic to a subspace of a  manifold, and $\Phi$ be a $C^0$ flow on $\st$.
Then $(X,\Phi)$ is linearizable by a $C^0$ embedding if and only if $\Phi$ is  a $1$-parameter subgroup of a $C^0$ torus action with finitely many orbit types.
\end{Th}

For example, it is immediate from Theorem~\ref{th:cont-compact}  that each flow in Figure~\ref{fig:c0-linearizable} is linearizable by a $C^0$ embedding.

\begin{figure}
	\centering
\includegraphics[width=1.0\linewidth]{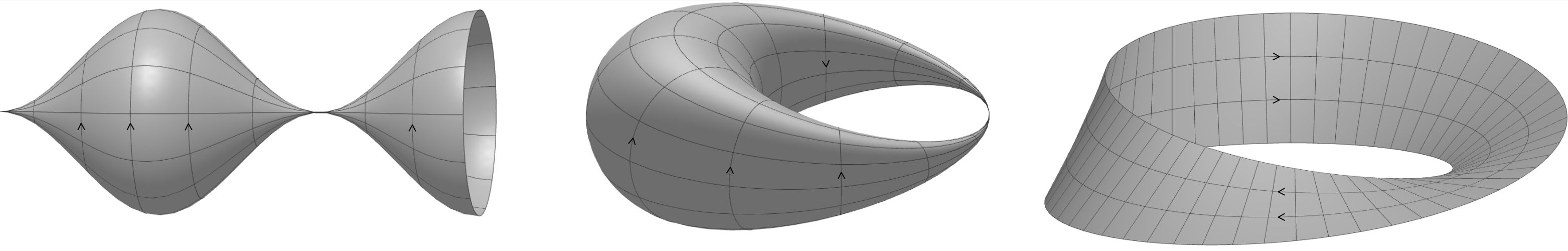}	
	\caption{Examples of flows that are linearizable by $C^0$ embeddings.
		This follows from Theorem~\ref{th:cont-compact} since each state space may be viewed as a compact subset of $\R^3$, and each flow is a 1-parameter subgroup of a $C^0$ torus (circle) action with finitely many orbit types. 
		The rightmost example is actually linearizable by a $C^\infty$ embedding, as a direct construction shows.
	}\label{fig:c0-linearizable}
\end{figure}

\begin{Rem}\label{rem:compare}
	To facilitate  comparison of Theorem~\ref{th:cont-compact} with a result of Liu, Ozay, and Sontag \cite[Cor.~3]{liu2023non,liu2025properties} (see Remark~\ref{rem:los-complement}), note that Theorem~\ref{th:cont-compact} remains true if ``$C^0$ embedding'' is replaced by ``injective $C^0$ map''.
	(This is because any injective $C^0$ map from a compact space to a Hausdorff space is a $C^0$ embedding.)
	The same is true of Theorem~\ref{th:cont} (by its final sentence). 
\end{Rem}

\begin{Rem}\label{rem:orbit-types}
	Theorem~\ref{th:cont-compact} explicitly restricts attention to $C^0$ torus actions with finitely many orbit types. This is not necessary in Theorem~\ref{th:smooth-compact}, since $C^1$ torus actions on compact manifolds always have finitely many orbit types \cite[Prop.~2.7.1]{duistermaat2000lie}.
\end{Rem}

\begin{Rem}\label{rem:top-assump-comment-compact}
The assumption that $\st$ is homeomorphic to a subspace of a manifold has nothing to do with $\Phi$.
By the Whitney embedding theorem, it is equivalent to the assumption that $\st$ admits a $C^0$ embedding into some $\R^n$, which is clearly necessary for the existence of such a $C^0$ embedding that is also linearizing.
The same assumption can also be reformulated  intrinsically: a compact topological space $\st$ is homeomorphic to a subspace of a manifold if and only if  $\st$ is metrizable and has finite Lebesgue covering dimension \cite[Thm~50.1, p.~316]{munkres2000topology}.
\end{Rem}

\subsection{The continuous attractor basin case}\label{subsec:cont-att-case}
Theorems~\ref{th:cont}, \ref{th:smooth}  make use of the following notion.
If $\att\subset \st$ is a globally asymptotically stable compact invariant set for a flow $\Phi$ on a topological space $\st$, $P\colon\st\to \att$ is an \concept{asymptotic phase map} for $\att$ if $P$ is a retraction ($P|_\att = \id_\att$) and $P\circ \Phi^t = \Phi^t\circ P$ for all $t\in \R$.
We say that $\att$ has \concept{$C^0$ asymptotic phase} if a $C^0$ asymptotic phase map for $\att$ exists.

\begin{Rem}\label{rem:asymp-in-phase}
If $\att$ has $C^0$ asymptotic phase $P$ and $\st$ is a metric space, then each $x\in \st$ is ``asymptotically in phase with'' $P(x)$ since $P|_\att = \id_\att$ and $\Phi^t(x)\to \att$, so $$\lim_{t\to\infty}\text{dist}(\Phi^t(x),\Phi^t(P(x)))= \lim_{t\to\infty}\text{dist}(\Phi^t(x),P(\Phi^t(x))) = 0.$$
\end{Rem}

Recall that a map $F\colon X\to Y$ between topological spaces is \concept{proper} if $F^{-1}(K)$ is compact for every compact subset $K \subset Y$ \cite[p.~118]{lee2011topological}.

\begin{restatable}[]{Th}{ThmCont}\label{th:cont}
Let $\st$ be a locally compact topological space homeomorphic to a subspace of a manifold.
Let $\Phi$ be a $C^0$ flow on $\st$ with a globally asymptotically stable compact invariant set $\att\subset \st$.
Then $(\st,\Phi)$ is linearizable by a $C^0$ embedding if and only if:
\begin{enumerate}
\item\label{item:cont-1} $\att$ has $C^0$ asymptotic phase, and 
\item\label{item:cont-2} the restricted flow $\Phi|_{\R\times \att}$ is  a $1$-parameter subgroup of a $C^0$ torus action with finitely many orbit types.
\end{enumerate}
Moreover, any linearizing injective $C^0$ map is proper, hence also a $C^0$ embedding.
\end{restatable}

Specializing Theorem~\ref{th:cont} to $\att = \{x_*\}$ yields the following extension of the (globalized \cite{lan2013linearization,mezic_book,eldering2018global,kvalheim2021existence}) classical Hartman-Grobman theorem \cite{hartman1960lemma,grobman1959homeomorphism}.
The latter guarantees linearizability by a dimension-\emph{preserving} $C^0$ embedding when $\st$ is a $C^\infty$ manifold, $\Phi$ is the flow of a $C^1$ vector field, and $x_*$ is hyperbolic.

\begin{Co}\label{co:hartman-grobman}
	Let $\st$ be a locally compact topological space homeomorphic to a subspace of a manifold.
	Let $\Phi$ be a $C^0$ flow on $\st$ with a globally asymptotically stable equilibrium point $x_*\in \st$.
	Then $(\st,\Phi)$ is linearizable by a $C^0$ embedding.
\end{Co}

Similarly, specializing Theorem~\ref{th:cont} to the image $\att$ of a limit cycle yields the following extension of the (globalized \cite{lan2013linearization,mezic_book,eldering2018global,kvalheim2021existence}) classical Floquet normal form theorem (\cite[Sec.~26]{abraham1967transversal}, \cite[Sec.~I.3]{bronstein1994smooth}, \cite[Sec.~4.3]{abraham2001manifolds}), a nonlinear generalization of the classical Floquet theory of linear time-periodic systems \cite[Sec.~III.7]{hale1980ordinary}.
The standard Floquet normal form theorem guarantees linearizability by a $C^0$ embedding when $\st$ is a $C^\infty$ manifold, $\Phi$ is the flow of a $C^1$ vector field, and $\att$ is normally hyperbolic.

\begin{Co}\label{co:floquet}
	Let $\st$ be a locally compact topological space homeomorphic to a subspace of a manifold.
	Let $\Phi$ be a $C^0$ flow on $\st$ with a globally asymptotically stable periodic orbit with image $\att$.
	Then $(\st,\Phi)$ is linearizable by a $C^0$ embedding if and only if $\att$ has $C^0$ asymptotic phase.
\end{Co}

\begin{Rem}\label{rem:top-assump-comment}
	Similarly to Remark~\ref{rem:top-assump-comment-compact}, 	the assumption that $\st$ is locally compact and homeomorphic to a subspace of a manifold has nothing to do with $\Phi$.
	It is equivalent to the assumption that $\st$ is homeomorphic to a \concept{locally closed} subspace $S$ of some manifold $M$, meaning that $S$ is the intersection of an open subset and a closed subset of $M$  (and is equipped with the subspace topology). 
	For example, any open or closed subset of $M$ is itself locally closed in $M$ since it is the intersection of $M$ with itself, and $M$ is both a closed and an open subset of itself.
	Thus, in particular, Theorem~\ref{th:cont} applies to the restriction $\Phi$ of a flow on a manifold $M$ to the basin of attraction $\st\subset M$ of an asymptotically stable compact invariant set $\att\subset M$, because such a basin $\st$ is always open in $M$.
	Finally, the same assumption can also be reformulated intrinsically: a locally compact topological space $\st$ is homeomorphic to a subspace of a manifold if and only if $\st$ is metrizable, separable, and has finite Lebesgue covering dimension (\cite[Thm~50.1, p.~316]{munkres2000topology}, \cite[Cor.~3.3.10]{engelking1989general}). 
\end{Rem}

\begin{Ex}\label{ex:stabilize-earlier}
	Let $\Phi_0$ be any $C^0$ flow on a compact space $\att_0$ that is linearizable by a $C^0$ embedding $F_0\colon A_0\to \R^{n_0}$, such as any of the flows from Examples~\ref{ex:zero-vf}, \ref{ex:basic-tori}, \ref{ex:sphere}, \ref{ex:klein}, \ref{ex:rp2}.
	Fix $\ell\in \N$ and let $\Phi_1$ be any $C^0$ flow on $\R^\ell$ for which the origin is globally asymptotically stable.
	Then $\att\coloneqq A_0\times \{0\}$ is a globally asymptotically stable compact invariant set for the flow $\Phi$ on $\st\coloneqq A_0\times \R^\ell$ defined by $\Phi^t = \Phi_0^t \times \Phi_1^t$ with $C^0$ asymptotic phase $(a,y)\mapsto a$.
	Thus, Theorem~\ref{th:cont} implies that $(\st,\Phi)$ is linearizable by a $C^0$ embedding.
	On the other hand, we can also use Corollary~\ref{co:hartman-grobman} to obtain a linearizing $C^0$ embedding of a special form.
	Namely, Corollary~\ref{co:hartman-grobman} yields a linearizing $C^0$ embedding $F_1\colon \R^\ell \to \R^{n_1}$ for $(\R^\ell, \Phi_1)$, and  $F_0\times F_1\colon A_0\times \R^\ell\to \R^{n_0}\times \R^{n_1}$ is readily checked to be a linearizing $C^0$ embedding for $(\st, \Phi)$.
\end{Ex}

\begin{Ex}\label{ex:nonhyp-po-nophase}
	Consider the $C^\infty$ flow $\Phi$ on $\st\coloneqq \R^2\setminus \{0\}$ generated by the system of ordinary differential equations, in polar coordinates $(r,\theta)$, 
	$$\dot r=-(r-1)^3, \qquad \dot\theta = r.$$
	Solving this system in closed form yields
	$$\Phi^t(1,\theta)=(1,\theta+t \mod 2\pi)$$
	and 
	$$\Phi^t(r,\theta)=\left(1+\frac{1}{\sqrt{2t+(r-1)^{-2}}},\theta-\frac{1}{r-1}+t+\sqrt{2t+(r-1)^{-2}} \mod 2\pi\right)$$
	when $r\neq 1$.
	Since $\sqrt{2t+(r-1)^{-2}}\to\infty$ as $t\to\infty$, the unit circle $\att = \{r=1\}$ is globally asymptotically stable, but each trajectory in $\R^2\setminus \att$ is \emph{not} asymptotically in phase with any trajectory in $\att$.
	Thus, Theorem~\ref{th:cont} implies that $(\st,\Phi)$ is \emph{not} linearizable by an injective $C^0$ map, even though Example~\ref{ex:basic-tori} showed that the restriction $(\att,\Phi|_{\R\times \att})$ is linearizable by a $C^0$ embedding.
\end{Ex}

Applying the ``proper'' statement of Theorem~\ref{th:cont} to a hypothetical linearizing injective $C^0$ map restricted to a basin of attraction yields the next corollary, which motivated our attention to basins of attractions (or, equivalently, global attractors) in Theorems~\ref{th:cont}, \ref{th:smooth}.

\begin{Co}\label{co:motivate}
Let $\st$ be a connected and locally compact topological space homeomorphic to a subspace of a manifold.
Let  $\Phi$ be a $C^0$ flow on $\st$ with a nonempty asymptotically stable compact invariant set $\att\subset \st$ whose basin of attraction is not equal to $\st$.
Then $(\st, \Phi)$ is not linearizable by an injective $C^0$ map.	
\end{Co}

\begin{Rem}\label{rem:los-complement}
	Corollary~\ref{co:motivate} is complementary to a result of Liu, Ozay, and Sontag \cite[Cor.~3]{liu2023non,liu2025properties}.
	Under some additional assumptions their result asserts that, if all forward $\Phi$-orbits are precompact and the collection of all omega-limit sets of points $x\in \st$ is countable and contains more than one element, then $(\st,\Phi)$ is not linearizable by an injective $C^0$ map.
	Similarly, for $\st$ connected  Corollary~\ref{co:motivate} implies that, if  $\Phi$ has more than one nonempty compact asymptotically stable invariant set, then $(\st,\Phi)$ is not linearizable by an injective $C^0$ map.
	Corollary~\ref{co:motivate} and \cite[Cor.~3]{liu2023non,liu2025properties} are independent since not every omega-limit set is an asymptotically stable set, and conversely.
\end{Rem}

\subsection{The smooth attractor basin case}\label{subsec:smooth-att-case}
As in \S\ref{subsec:cont-att-case}, if $\att\subset \st$ is a globally asymptotically stable compact $C^{k}$ embedded invariant manifold for a flow $\Phi$ on a $C^k$ manifold $\st$, we say that $\att$ has \concept{$C^k$ asymptotic phase} if a $C^k$ asymptotic phase map $P\colon \st\to \att$ for $\att$ exists.

For the following theorem, let $\pi\colon \T\st\to \st$ be the tangent bundle and $\att\subset \st$ be a subset.
Then $\T_{\att}\st\coloneqq \pi^{-1}(\att)$ denotes the tangent bundle of $\st$ over $\att$, and if $U\supset \att$ is open and $G\colon U\to \R^\ell$ is $C^1$, then $\T_\att G\colon\T_\att \st\to \T \R^\ell$ denotes the restriction $\T G|_{\T_\att \st}$ of the tangent map of $G$ to $\T_\att \st$.

\begin{restatable}[]{Th}{ThmSmooth}\label{th:smooth}
Fix $k\in \N_{\geq 1}\cup\{\infty\}$.
Let $\Phi$ be a $C^k$ flow of a uniquely integrable vector field
on a $C^k$ manifold $\st$ with a globally asymptotically stable compact invariant set $\att\subset \st$.
Then $(\st,\Phi)$ is linearizable by a $C^k$ embedding if and only if:
\begin{enumerate}
\item\label{item:smooth-1} $\att$ is a $C^k$ embedded submanifold of $\st$ with $C^k$ asymptotic phase;

\item\label{item:smooth-2}  the restricted flow $\Phi|_{\R\times \att}$ is  a $1$-parameter subgroup of a $C^k$ torus action; and 

\item\label{item:smooth-3}there is $\ell\in \N$,  a matrix $B\in \R^{\ell\times \ell}$ with all eigenvalues having negative real part, an open set $U\supset \att$, and a $C^k$ map $G\colon U\to \R^\ell$ satisfying $\ker(\T_\att G)= \T \att$ and $G(\Phi^t(x)) = e^{Bt} G(x)$ for all $x\in U$ and $t\in \R$ such that $\Phi^t(x)\in U$.
\end{enumerate}
Moreover, any linearizing $C^k$ embedding is a proper map.
\end{restatable}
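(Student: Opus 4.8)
The plan is to prove both implications by analyzing the linear flow through the spectral decomposition of the generator $B$, reserving a no-escape argument for the entrance time into a neighborhood of $\att$ to obtain the ``moreover'' clause.

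\textbf{Necessity.} Suppose $F\colon\st\to\R^n$ is a smooth linearizing embedding with $F\circ\Phi^t = e^{Bt}\circ F$. I would decompose $\R^n = E^s\oplus E^c\oplus E^u$ into the stable, center, and unstable generalized eigenspaces of $B$, split $E^c = E^0\oplus E^{\mathrm{nil}}$ according to whether the purely imaginary eigenvalues are semisimple, and let $\pi_s,\pi_0$ be the ($e^{Bt}$-equivariant) linear projections onto $E^s,E^0$. Since $\att$ is globally asymptotically stable, continuity of $F$ forces $e^{Bt}F(x)\to F(\att)$ as $t\to\infty$ for every $x$; a Jordan-form analysis then shows that $F(\st)$ lies in the basin $S = E^s\oplus F(\att)$ of $F(\att)$, that $\pi_0(S)\subseteq F(\att)$, and that $F(\st)\cap E^0 = F(\att)$. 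Setting $P\coloneqq F^{-1}\circ\pi_0\circ F$ (well defined since $\pi_0\circ F$ lands in $F(\att)$) yields a smooth idempotent $P\circ P = P$ with image $\att$ that commutes with the flow, so $\att = P(\st)$ is automatically a smoothly embedded submanifold and $P$ is a smooth asymptotic phase map, giving condition~\eqref{item:smooth-1}. Condition~\eqref{item:smooth-2} is immediate from Theorem~\ref{th:smooth-compact} applied to the smooth linearizing embedding $F|_\att$ of $(\att,\Phi|_\att)$. For condition~\eqref{item:smooth-3} I would take $U=\st$, $G\coloneqq\pi_s\circ F$, and replace $B$ by $B|_{E^s}$ (eigenvalues with negative real part); equivariance is inherited from $F$, while $\ker(\T_\att G)=\T\att$ follows because $F(\st)\subseteq E^s\oplus F(\att)$ confines $\T_{F(a)}F(\st)$ to $E^s\oplus\T F(\att)$, on which $\pi_s$ has kernel exactly $\T F(\att)$.

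\textbf{Sufficiency.} Assume \eqref{item:smooth-1}--\eqref{item:smooth-3}. Using \eqref{item:smooth-2} and Theorem~\ref{th:smooth-compact}, pick a smooth linearizing embedding $F_A\colon\att\to\R^{n_1}$ with $F_A\circ\Phi^t = e^{B_1 t}\circ F_A$. First I would globalize $G$: because the equivariance in \eqref{item:smooth-3} constrains only the two endpoints, $\tilde G(x)\coloneqq e^{-Bt}G(\Phi^t x)$ (any $t$ with $\Phi^t x\in U$) is a well-defined smooth map $\st\to\R^k$ that extends $G$ and satisfies $\tilde G\circ\Phi^t = e^{Bt}\tilde G$ globally; a uniform-norm argument using invariance of $\att$ and $\norm{e^{Bt}}\to 0$ shows $\tilde G|_\att\equiv 0$. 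Define $F\coloneqq(F_A\circ P,\tilde G)$, which is linearizing for $\hat B=\mathrm{diag}(B_1,B)$. At points of $\att$, injectivity of $\T F$ follows by combining $\ker(\T_\att\tilde G)=\T\att$ with the fact that $F_A\circ P$ restricts to the immersion $F_A$ on $\T\att$; since maximal rank is an open and (by flow-equivariance) flow-invariant condition, global asymptotic stability propagates it to all of $\st$, so $F$ is an immersion. For injectivity, $F(x)=F(y)$ forces $P(x)=P(y)$, hence $\dist{\Phi^t x}{\Phi^t y}\to 0$ as $t\to\infty$ by asymptotic phase, so both orbits eventually enter a neighborhood $W$ of $\att$ on which $F$ is injective (such $W$ exists by the standard compactness argument, since $F|_\att$ is an embedding and $F$ an immersion), and equivariance then gives $\Phi^T x=\Phi^T y$, whence $x=y$.

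\textbf{Properness and the main obstacle.} It remains to prove that $F$ is a proper map, which simultaneously upgrades the injective immersion to an embedding and establishes the ``moreover'' clause for an arbitrary linearizing embedding. Since $F_A(\att)$ is compact, properness of $F$ reduces to properness of $\tilde G$, i.e.\ to ruling out a sequence $x_n$ escaping every compact set while $P(x_n)$ and $\tilde G(x_n)$ remain bounded; I expect this to be the crux, the difficulty being that $k$ need not equal $\mathrm{codim}\,\att$, so a small value of $\tilde G$ does not force proximity to $\att$. The plan is an \emph{entrance-time} argument: fix nested positively invariant neighborhoods $\att\subset W_1\subset\cl W_1\subset W_2$ with $\cl W_2$ compact and $F|_{W_2}$ an embedding, and let $\tau_n$ be the first time $\Phi^t x_n$ enters $W_1$. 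If $\tau_n$ stays bounded, the entrance points $\Phi^{\tau_n}x_n\in\cl W_2$ subconverge and flowing back a bounded time shows $x_n$ subconverges, a contradiction. If $\tau_n\to\infty$, then $\tilde G(\Phi^{\tau_n}x_n)=e^{B\tau_n}\tilde G(x_n)\to 0$ drives the limit of the entrance points into $\tilde G^{-1}(0)\cap W_2=\att$, yet points just before entrance lie outside $W_1$, contradicting that backward orbits of points in $\att$ remain in $\att\subset W_1$. The delicate supporting facts are the identity $\tilde G^{-1}(0)\cap W_2=\att$ (from injectivity of $F|_{W_2}$ together with $\tilde G|_\att=0$ and $P|_\att=\id$) and the existence of the positively invariant $W_i$, both of which follow from asymptotic stability and the embedding property near $\att$.
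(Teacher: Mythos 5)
Your proposal is correct and follows the same overall architecture as the paper's proof: necessity via the spectral splitting of the generator (the center projection yielding the smooth asymptotic phase and the submanifold structure of $\att$ via the smooth-retraction fact, the stable projection yielding $G$), and sufficiency by pairing a linearizing embedding of $(\att,\Phi|_{\R\times\att})$ composed with $P$ against a flow-equivariant globalization of $G$, then propagating immersivity and injectivity from a neighborhood of $\att$ to all of $\st$ using equivariance and attraction. The differences lie in the technical execution of the sufficiency half. The paper globalizes $G$ by first invoking a proper strict smooth Lyapunov function $V$ \cite{wilson1969smooth,fathi2019smoothing}, showing the time-to-impact map $\tau$ onto the level set $N=\{V=c\}\subset U$ is smooth, and setting $F_0 = e^{-B\tau(\cdot)}G\circ\Phi^{\tau(\cdot)}$ off $\{V\leq c\}$; properness is then read off immediately from $\tau(x)\to\infty$ as $V(x)\to\infty$. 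Your $\tilde G(x)=e^{-Bt}G(\Phi^t x)$ accomplishes the same globalization more cheaply (no Lyapunov function, no cross-section, and smoothness is a local two-line check), but you then pay for properness with the entrance-time dichotomy; that argument is sound --- the identity $\tilde G^{-1}(0)\cap W_2=\att$ holds for the reason you give, the limit entrance point in the unbounded-time case would lie in $\att\cap\partial W_1=\varnothing$, and arbitrarily small positively invariant open neighborhoods with compact closure exist by standard stability arguments --- and it has the added benefit of establishing the ``moreover'' clause self-containedly, whereas the paper simply inherits it from the already-proved topological Theorem~\ref{th:cont}. One imprecision in your necessity step: the splitting $E^c = E^0\oplus E^{\mathrm{nil}}$ is not in general a direct sum of $B$-invariant subspaces (a nontrivial Jordan block with purely imaginary eigenvalue admits no invariant complement to its eigenspace), so a projection $\pi_0$ defined on all of $\R^n$ need not be $e^{Bt}$-equivariant; this is harmless because, exactly as in the paper, the Jordan-form/asymptotic-stability analysis first confines $F(\st)$ to the invariant subspace $E^s\oplus E^0$, and only the restriction of $\pi_0$ to that subspace --- which is equivariant --- is ever used.
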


\begin{Rem}\label{rem:uniq-int}
Here \concept{uniquely integrable} means that the vector field $x\mapsto \frac{d}{dt}\Phi^t(x)|_{t=0}$ has unique maximal trajectories.
By the Picard-Lindel\"{o}f theorem this holds if $k\geq 2$ or, more generally, if the vector field generating $\Phi$ is locally Lipschitz.
\end{Rem}

\begin{Ex}\label{ex:nonresonance}
Let $(\st,\Phi)$ be the basin of attraction of a stable hyperbolic limit cycle for a $C^{\infty}$ flow.
In this case, it is known that Conditions~\ref{item:cont-1}, \ref{item:cont-2} are always satisfied with $k=\infty$.
Thus, such a flow $(\st,\Phi)$ is linearizable by a $C^\infty$ embedding if and only if Condition~\ref{item:smooth-3} is also satisfied with $k=\infty$.
This is the case, e.g., under the typically-satisfied 
condition that the Floquet multipliers associated with the linearized flow are nonresonant \cite[Prop.~3]{kvalheim2021existence}.
Note that any linearizing $C^0$ embedding must be dimension-\emph{increasing} for this example.
(Remark~\ref{rem:about-cond-3} discusses Condition~\ref{item:smooth-3} further.)
\end{Ex}

\begin{Rem}\label{rem:dimension-varying}
Note that $\st$ is connected if and only if $\att$ is connected  \cite[Thm~6.3]{gobbino2001topological}.
If $\st$ is not connected, then Condition~\ref{item:smooth-1} should be understood in the sense that the (finitely many) connected components of $\att$ are compact $C^k$ embedded submanifolds of possibly different dimensions. 
\end{Rem}

\begin{Rem}\label{rem:nhim}
If $(\st,\Phi)$ is linearizable by a $C^{k\geq 1}$ embedding, then $\att$ must be a normally hyperbolic invariant manifold (NHIM).
More precisely, $\att$ is an eventually relatively $\infty$-NHIM (\cite[p.~4]{hirsch1977invariant}, \cite[p.~4207]{eldering2018global}) with respect to any Riemannian metric on $\st$.
This follows readily from the proof of Theorem~\ref{th:smooth}, which shows that $(X,\Phi)$ is linearizable by a proper $C^k$ embedding $F\colon\st\to \R^n$ such that $F(\att)$ is contained in the real invariant subspace for the matrix generating the linear flow corresponding to eigenspaces of purely imaginary eigenvalues.
The same reasoning implies that $\att$ satisfies ``center bunching'' conditions \cite[Eq.~(11)]{eldering2018global} of all orders, consistent with the existence of $C^k$ asymptotic phase implied by such center bunching conditions \cite[Cor.~2]{eldering2018global}. 
\end{Rem}

\begin{Rem}\label{rem:about-cond-3}
Theorem~\ref{th:smooth} is less satisfying than Theorems~\ref{th:smooth-compact}, \ref{th:cont-compact}, \ref{th:cont} since, roughly speaking, verifying Condition~\ref{item:smooth-3}  seems roughly half as difficult as directly verifying that $(\st,\Phi)$ is linearizable by a $C^k$ embedding. 
However, if $\att$ is a compact $C^k$ embedded NHIM, three conditions that together imply Condition~\ref{item:smooth-3} are the following, where the tangent flow $\T \Phi\colon \R\times \T \st\to \T \st$ is defined (with some abuse of notation) by $(\T\Phi)^t = \T \Phi^t$: 
\begin{enumerate}[label=(\alph*)]
\item\label{item:nhim-es-ck-triv} The ``stable vector bundle'' (\cite[p.~1]{hirsch1977invariant}, \cite[p.~4207]{eldering2018global}) $E^s\subset \T_\att \st$ of $\att$  is $C^k$ and globally trivializable,

\item\label{item:nhim-linearizable} $\Phi$ is $C^{k+1}$ and locally $C^k$ conjugate to $\T\Phi|_{\R\times E^s}$, and

\item\label{item:nhim-es-red}  $\T\Phi|_{\R \times E^s}$ is ``$C^k$ reducible'' (conjugate via a $C^k$ vector bundle automorphism covering $\id_\att$) to the product of $\Phi|_{\R\times \att}$ with a constant (with respect to some global trivialization of $E^s$) linear flow.

\end{enumerate}
Regarding \ref{item:nhim-es-ck-triv}, note that $E^s$ is globally trivializable if and only if $\att$ is a level set of some $C^k$ submersion, and a sufficient condition implying $E^s$ is $C^k$ is that $\att$ satisfies ``$k$-center bunching'' conditions \cite[Eq.~11]{eldering2018global}.
Assuming $\Phi$ is $C^{k+1}$, sufficient conditions for \ref{item:nhim-linearizable} arise from $C^k$ local linearization theorems for NHIMs \cite{sternberg1957local,takens1971partiallyhyp,robinson1971differentiable,sell1983linearization,sell1983vector,sakamoto1994smooth,bronstein1994smooth} combined with a globalization technique \cite[sec.~4.6]{mezic_book} (see also \cite[Thm~2]{eldering2018global}).
However, even in the case of Sternberg's linearization theorem for a stable hyperbolic equilibrium \cite{sternberg1957local}, the available NHIM linearization theorems yield sufficient conditions for \ref{item:nhim-linearizable} that are not necessary in general.
Since Condition~\ref{item:smooth-2} implies that $\att$ decomposes into quasiperiodic invariant tori in a certain way, sufficient conditions for \ref{item:nhim-linearizable} closest to necessary for our purposes might be obtained via techniques from the literature on normally hyperbolic invariant quasiperiodic tori \cite{haro2006param,haro2016param} (cf. \cite[Thm~4.1]{haro2006param}, \cite[Sec.~9]{mezic2020spectrum}).
Similarly, useful sufficient conditions for \ref{item:nhim-es-red} might be obtained via techniques from the related literature on reducibility of linear flows on vector bundles over quasiperiodic tori \cite{johnson1981smoothness,puig2002reducibility} (cf. \cite[Thm~1]{johnson1981smoothness}).
However, Condition~\ref{item:smooth-3} can hold even if \ref{item:nhim-es-ck-triv}, \ref{item:nhim-linearizable}, \ref{item:nhim-es-red} do not.
What is the gap between Condition~\ref{item:smooth-3} and \ref{item:nhim-es-ck-triv}, \ref{item:nhim-linearizable}, \ref{item:nhim-es-red}?
\end{Rem}

\section{Proofs}\label{sec:proof}
This section contains the proofs of Theorems~\ref{th:smooth-compact}, \ref{th:cont-compact}, \ref{th:cont}, \ref{th:smooth} and Propositions~\ref{prop:hopf}, \ref{prop:recog}.
The following lemma is Theorems~\ref{th:smooth-compact}, \ref{th:cont-compact} combined.
\begin{Lem}\label{lem:compact-case}
Fix $k\in \N_{\geq 1}\cup\{\infty\}$.
Let $\Phi$ be a $C^k$ (resp. $C^0$) flow on a compact $C^k$ manifold (resp. compact topological space homeomorphic to a subspace of a manifold) $\st$.
Then $(X,\Phi)$ is linearizable by a $C^k$ ($C^0$) embedding if and only if $\Phi$ is  a $1$-parameter subgroup of a $C^k$ ($C^0$ with finitely many orbit types) torus action.
\end{Lem}

\begin{proof}
First assume that $\Phi$ is  a $1$-parameter subgroup of a $C^k$ ($C^0$ with finitely many orbit types) action $\Theta\colon H\times X\to X$  of a torus $H$ on $X$.
By the $C^k$ version \cite[Thm~4.6.6.]{duistermaat2000lie} ($C^0$ version \cite[Thm~6.1]{mostow1957equivariant}) of the Mostow-Palais equivariant embedding theorem \cite{mostow1957equivariant,palais1957imbedding,palais1960classification}, there exists $n\in \N$, a $C^k$ ($C^0$) 
embedding $F\colon X\to \R^n$, and a $C^\infty$ Lie group homomorphism $\rho\colon H\to \text{GL}(\R^n)$ such that 
\begin{equation}\label{eq:equiv-emb}
F \circ \Theta^{h} = \rho(h)\circ F
\end{equation}
for all $h\in H$.
Since by assumption there is $\boldsymbol \omega$ in the Lie algebra of $H$ such that $\Phi^t = \Theta^{\exp(\boldsymbol \omega t)}$ for all $t\in \R$, \eqref{eq:equiv-emb} implies that
$$F\circ \Phi^t = F\circ \Theta^{\exp(\boldsymbol \omega t)}= \rho(\exp(\boldsymbol \omega t))\circ F = e^{(\rho_* \boldsymbol \omega)t}\circ F$$
for all $t\in \R$, where $\rho_*$ is the $\rho$-induced Lie algebra homomorphism \cite[Def.~1.10.2]{duistermaat2000lie} and the third equality follows from $\rho$ being a Lie group homomorphism \cite[Lem.~1.5.1]{duistermaat2000lie}.
Thus, $F$ is a $C^k$ ($C^0$) embedding of $(X,\Phi)$ into the linear flow generated by the linear vector field $B\coloneqq \rho_* \boldsymbol \omega$.

Next, assume that $(\st,\Phi)$ is linearizable by a  $C^k$ ($C^0$) embedding. 
Since $\st$ is compact, the Jordan normal form theorem implies that any linearizing $C^0$ embedding of $(\st,\Phi)$ sends $\st$ into the sum of real invariant linear subspaces corresponding to eigenspaces of purely imaginary eigenvalues of the matrix $B$ generating the linear flow.
Thus, after embedding the real linear space into a complex linear space via complexifying and diagonalizing $B$, we may assume that $\st\subset \C^n$ is an invariant subset for the linear flow generated by the ODE
\begin{equation}\label{eq:linear-ode}
\begin{split}
\dot{z}&= 2\pi i\,\text{diag}(\boldsymbol\omega)z
\end{split}
\end{equation}
on $\C^n$, where $i=\sqrt{-1}$, $z=(z_1,\ldots, z_n)\in \C^n$, $\boldsymbol \omega = (\omega_1,\ldots,\omega_n)\in \R^n$, and $\text{diag}(\boldsymbol\omega)$ is the diagonal matrix with $j$-th diagonal entry $\omega_j$.
Define a $C^\infty$ action $\Theta$ of $T^n=\R^n/\Z^n$ on $\C^n$ by $\Theta^\tau(z)\coloneqq \exp(2\pi i \text{diag}( \tau))z$.
Note that $\st$ is a union of closures of trajectories of \eqref{eq:linear-ode} since $\st$ is a closed subset of $\C^n$, with each such closure coinciding with an orbit $\Theta^H( z)$ of the  closure $H\subset T^n$ of the $1$-parameter subgroup $\boldsymbol{\omega}\R \mod 1$.
Thus, $\st$ is a union of $H$-orbits, so the $C^\infty$ $H$ action on $\C^n$ restricts to a well-defined $C^k$ ($C^0$ with finitely many orbit types \cite[Thm~1.8.4]{palais1960classification}) $H$ action on $\st$ with $\Phi$  a $1$-parameter subgroup of the restricted $H$ action.
Finally, the closed subgroup theorem \cite[Cor.~1.10.7]{duistermaat2000lie} and the classification of abelian Lie groups \cite[Cor.~1.12.4]{duistermaat2000lie} imply that $H$ is a $C^\infty$ Lie subgroup of $T^n$ isomorphic to a torus. 
This completes the proof.
\end{proof}

This completes the proofs of Theorems~\ref{th:smooth-compact}, \ref{th:cont-compact}. 
The remaining results are restated for convenience, then proved.
\PropHopf*

\begin{proof}
By Theorem~\ref{th:smooth-compact}, $\Phi$ is  a $1$-parameter subgroup of a $C^1$ torus action on $X$.
By restricting the torus action to the action of the closure $H$ of this $1$-parameter subgroup, we obtain a $C^1$ action $\Theta$ of a subtorus $H$ on $\st$ with the property that the equilibria of $\Phi$ coincide with the fixed points of $\Theta$.
Let $x\in \st$ be an isolated such equilibrium/fixed point.
By Bochner's linearization theorem, there is a $\Theta$-invariant open neighborhood $U$ of $x$ and $C^1$ local coordinates on $U$ in which $\Theta$ is the linear action of a closed subgroup of the orthogonal group $O(n)$ on $\R^n$ (\cite[Thm~2.2.1]{duistermaat2000lie}, \cite[Thm~4.7.1]{hirsch1976difftop}).
Thus, in these coordinates $\Phi$ corresponds to the action of a $1$-parameter subgroup of $O(n)$, and hence the vector field generating  $\Phi$ corresponds to a skew-symmetric linear vector field $B$.
Moreover, $B$ is invertible since $x$ is an isolated equilibrium of $\Phi$ and hence an isolated zero of $B$.
The fact that all invertible skew-symmetric matrices have even dimensions implies that $\dim \st$ is even.
Finally, the determinant of $B$ is positive since eigenvalues of an invertible real skew-symmetric matrix come in imaginary conjugate pairs, so the Hopf index of the vector field generating $\Phi$ at $x$ is equal to $1$.
\end{proof}

\PropRecog*

\begin{proof}
The condition $F\circ \Phi^t = \boldsymbol \omega t + F \mod 1$ implies that the open set $R\subset T^n$ of regular values of $F$ is invariant under the irrational linear flow $(t,y)\mapsto \boldsymbol{\omega}t+y\mod 1$.
Since an irrational linear flow is minimal, either $R = T^n$ or $R=\varnothing$.
Sard's theorem \cite[Thm~3.1.3]{hirsch1976difftop} eliminates the latter option, so $F$ is a local diffeomorphism, hence also a covering map since $\st$ is compact \cite[p.~303, 11-9]{lee2011topological}. 
The classification of covering spaces of $T^n$ \cite[\S 1.3]{hatcher2002algebraic} yields a natural $C^k$ identification of $\st$ with $T^n=\R^n/\Z^n$ such that $F\colon \R^n/\Z^n\approx \st\to \R^n/\Z^n$  is additionally a Lie group homomorphism, so is given by an invertible matrix $B$ with integer entries.
Since $F\circ \Phi^t = \boldsymbol \omega t + F \mod 1$, this implies that $\Phi$ is identified with the flow of the constant vector field $\tilde{\boldsymbol{\omega}}= B^{-1}\boldsymbol{\omega}$.
The entries of $B^{-1}$ are rational, so rational independence of the components of $\boldsymbol{\omega}$ implies the same for $\tilde{\boldsymbol{\omega}}$.
\end{proof}

\ThmCont*
\begin{proof}
Since $\st$ is locally compact and $\att$ is globally asymptotically stable, there exists a proper $C^0$ Lyapunov function $V\colon \st\to [0,\infty)$ satisfying $V^{-1}(0)=\att$ and strictly decreasing along $\Phi$-trajectories outside $\att$ \cite[Thm~2.7.20]{bhatia1967stability}.
Fix any $c\in (0,\infty)$ and define the compact set $N\coloneqq V^{-1}(c)$.
Each trajectory in $\st\setminus \att$ crosses $N$ exactly once, and the ``time-to-impact-$N$'' map $\tau\colon \st\setminus \att\to (-\infty,\infty)$ is $C^0$ (by the method of proof of \cite[Thm~2.7.14]{bhatia1967stability} or \cite[Theorem~II.2.3]{conley1978isolated}).	

We now show that any linearizing injective $C^0$ map $F\colon \st\to \R^n$ is proper.
Let $B\in \R^{n\times n}$ satisfy $F\circ \Phi^t = e^{Bt}\circ F$ for all $t\in \R$.
Since $F(N)$ and $F(\att)$ are disjoint compact sets satisfying $e^{Bt}F(N)\to F(\att)$ as $t\to\infty$, the Jordan normal form theorem implies that $e^{-Bt}F(N)\to \infty$ as $t\to\infty$.
Since $\tau(x)\to\infty$ as $V(x)\to\infty$, 
$$F(x) = e^{-B\tau(x)}F(\Phi^{\tau(x)}(x))\in e^{-B\tau(x)}F(N)\to \infty$$
for $x\in \st\setminus \att$ as $V(x)\to\infty$.
Since $V$ is proper, this shows that $F$ is proper.

Next, assuming that Conditions \ref{item:cont-1} and \ref{item:cont-2} hold, we show that $(\st,\Phi)$ is linearizable by a $C^0$ embedding.
By Condition~\ref{item:cont-2} and Theorem~\ref{th:cont-compact}, there exists  $n_0\in \N$, $B_0\in \R^{n_0\times n_0}$, and 
a $C^0$ embedding $F_0\colon \att\to \R^{n_0}$ that linearizes $(\att,\Phi|_{\R\times \att})$.
Since $\st$ is homeomorphic to a subspace of a manifold, the Whitney embedding theorem implies the existence of a $C^0$ embedding $F_1\colon N\to \R^{n_1}$ with image $F_1(N)$ contained in the unit sphere $S^{n_1-1}\subset \R^{n_1} $ \cite[p.~316]{munkres2000topology}.
Thus, the map $F\colon \st \to \R^{n_0}\times \R^{n_1}$ defined by
\begin{equation*}
	F(x)\coloneqq \begin{cases}
		(F_0(x),0), & x\in \att\\
		(F_0\circ P(x),e^{\tau(x)}F_1(\Phi^{\tau(x)}(x)), &x\not \in \att
	\end{cases}
\end{equation*}
is $C^0$ since $F_0\circ P|_A = F_0$ and since $\tau(x)\to -\infty$ as $x\to \att$, so $$e^{\tau(x)}F_1(\Phi^{\tau(x)}(x))\in e^{\tau(x)}S^{n_1-1}\to 0$$ as $x\to\att$. 
Moreover, $F$ is injective since $F(x)=F(y)$ implies that either (i) $x=y\in \att$, or (ii) $x$ and $y$ belong to the same trajectory and have the same impact time $\tau(x)=\tau(y)$. 
But if $x,y\in \st\setminus \att$ belong to the same trajectory and $\tau(x)=\tau(y)$, then $x = y$ since $\tau$ is injective on each trajectory.
This establishes that $F\colon \st\to \R^{n_0}\times \R^{n_1}$ is an injective $C^0$ map.
Since $\tau \circ \Phi^t|_{\st\setminus \att} = \tau-t$, 
\begin{align*}
	F\circ \Phi^t(x) &= (F_0\circ P\circ \Phi^t(x),e^{\tau\circ \Phi^t(x)}F_1(\Phi^{\tau\circ \Phi^t(x)}\circ\Phi^t(x)))\\
	&= (F_0\circ P \circ \Phi^t(x),e^{\tau(x)-t}F_1(\Phi^{\tau(x)-t}\circ \Phi^t(x))) \\&= (F_0\circ \Phi^t \circ P(x), e^{\tau(x)-t}F_1(\Phi^{\tau(x)}(x)) )\\
	&= (e^{B_0 t}\circ F_0\circ P(x), e^{-t} e^{\tau(x)}F_1(\Phi^{\tau(x)}(x)) )\\
	&= \exp\left( \begin{bmatrix}
		B_0 & 0\\
		0 & -I
	\end{bmatrix}t\right)F(x)
\end{align*}
for all $x\in \st\setminus \att$ and $t\in \R$, and
\begin{align*}
	F\circ \Phi^t(x)&= (F_0\circ \Phi^t(x),0)\\
	&= (e^{B_0 t} \circ F_0, e^{-t} 0)\\
	&= \exp\left( \begin{bmatrix}
		B_0 & 0\\
		0 & -I
	\end{bmatrix}t\right)F(x)
\end{align*}
for all $x\in \att$ and $t\in \R$.
This shows that the injective $C^0$ map $F$ is linearizing.
We have shown that such an $F$ is proper, hence a $C^0$ embedding \cite[Cor.~4.97(b)]{lee2011topological}.

Finally, assuming that $(X,\Phi)$ is linearizable by a (necessarily proper) $C^0$ embedding, we show that Conditions~\ref{item:cont-1} and \ref{item:cont-2} hold.
Restricting any linearizing $C^0$ embedding to $\att$ and invoking Lemma~\ref{lem:compact-case} immediately yields Condition \ref{item:cont-2}, so we need only verify Condition~\ref{item:cont-1}.

To do so, we may assume there is $n\in \N$ and $B\in \R^{n\times n}$ such that $\Phi$ is the restriction of the linear flow generated by $B$ to a closed invariant subset $\st\subset \R^n$.
Since  $\att$ is a globally asymptotically stable compact invariant set, $\st$ must be contained in the sum of the real invariant linear subspace $E_-\subset \R^n$ corresponding to generalized eigenspaces of eigenvalues with negative real part for $B$ and  the real invariant linear subspace $E_0$ corresponding to eigenspaces of purely imaginary eigenvalues.
Moreover, $A\subset E_0$.
By restricting $B$ to this sum, we may and do assume that  $\R^n = E_0 \oplus E_-$.
Let
$$P_0\colon \R^n = E_0 \oplus E_- \to E_0$$
be the linear projection to $E_0$ with kernel $E_-$.
Fix any $v_0\in E_0$ and $v_-\in E_-$ such that $v_0+v_-\in \st$.
As in the proof of Lemma~\ref{lem:compact-case}, $v_0=P_0(v_0+v_-)$ is contained in an invariant  $C^\infty$ embedded torus $T\subset E_0$ densely filled by each trajectory  within it.
Since $e^{Bt}v_-\to 0$ as $t\to\infty$, linearity implies that $T$ is equal to the omega-limit set of the trajectory $t\mapsto e^{Bt}(v_0+v_-)$.
Since this omega-limit set is necessarily contained in the $\Phi$-globally asymptotically stable set $\att$, this implies that $T \subset \att$.
Hence $P_0(v_0+v_-) = v_0\in T\subset \att$, so arbitrariness of $v_0+v_-\in \st$ implies that $P_0(\st)\subset \att$.
Since also $P_0|_\att = \id_\att$, the map $P\coloneqq P_0|_\st\colon \st\to \att$ is a well-defined $C^0$ retraction.
Moreover, $P_0$ commutes with $B$ by construction and hence $$P\circ \Phi^t = (P_0\circ e^{Bt})|_\st = (e^{Bt}\circ P_0)|_\st = \Phi^t\circ P$$
for all $t$.
Thus, Condition \ref{item:cont-1} holds.
\end{proof}

The proof that Conditions~\ref{item:smooth-1}, \ref{item:smooth-2}, \ref{item:smooth-3} in the following theorem are sufficient for linearizability by a $C^k$ embedding works by first constructing a linearizing $C^k$ embedding on a small neighborhood of $\att$, then using a globalization technique of Lan and Mezi\'{c} \cite{lan2013linearization} (see also \cite{mezic_book,eldering2018global,kvalheim2021existence}) to construct a linearizing $C^k$ embedding defined on all of $\st$.

\ThmSmooth*

\begin{proof}
First assume that $(\st,\Phi)$ is linearizable by a $C^k$ embedding.
That any such embedding is necessarily a proper map is immediate from Theorem~\ref{th:cont}.
We may therefore assume that $\st$ is a properly embedded $C^k$ submanifold of $\R^n$ for some $n\in \N$ and that $\Phi$ is the restriction of the linear flow generated by some $B\in \R^{n\times n}$.

As in the proof of Theorem~\ref{th:cont}, we may and do assume that all eigenvalues of $B$ have nonpositive real part.
Let $E_0$, $E_-$, and
$$P_0\colon \R^n = E_0\oplus E_-\to E_0$$ be as in the proof of Theorem~\ref{th:cont}.
The same argument from that proof implies that $P_0(\st)=\att$ and $P_0|_\att = \id_\att$, so that $P_0|_\st$ is a continuous retraction when viewed as a map into $\att$.
But $P_0|_\st$ is also $C^k$ when viewed as a map into $\st$, which implies that $\att$ is a $C^k$ embedded submanifold of $\st$ \cite[p.~20]{hirsch1976difftop}. 
Since $P_0$ commutes with the linear flow and hence $P_0|_\st$ commutes with $\Phi$, Condition~\ref{item:smooth-1} holds with $C^k$ asymptotic phase map $P\coloneqq P_0|_\st\colon \st\to \att$.
Condition~\ref{item:smooth-1} and Theorem~\ref{th:smooth-compact} in turn imply that Condition~\ref{item:smooth-2} holds.

To show that Condition~\ref{item:smooth-3} holds, let 
$$P_-\colon \R^n= E_0\oplus E_-\to E_-$$ be the linear projection with kernel $E_0$, and define $G\coloneqq P_-|_{\st}\colon \st\to E_{-}\approx \R^\ell$ with $\ell=\dim E_{-} $.
Since
$\R^n = \ker(P_-)\oplus\ker(P_0)$ and  $P|_\att = \id_\att$, we have $\T_\att \st = \ker(\T_\att G) \oplus \ker(\T_\att P)$ and  $\T_\att \st = \T \att \oplus \ker(\T_\att P)$.
Since $G|_\att = 0$ implies that  $\T \att \subset \ker(T_\att G)$, it follows that $\ker(\T_\att G) = \T \att$.  
And since $P_-$ commutes with $B$,
$$G\circ \Phi^t = (P_-\circ e^{Bt})|_{\st}=(e^{Bt}\circ P_-)|_\st = e^{Bt}\circ G$$
for all $t\in \R$.  
Thus, the final Condition~\ref{item:smooth-3} holds.

 Next assume that Conditions \ref{item:smooth-1}, \ref{item:smooth-2}, \ref{item:smooth-3} hold.
Since the vector field generating $\Phi$ is uniquely integrable, there exists a proper strict $C^k$ Lyapunov function $V\colon \st\to [0,\infty)$ for $\att= V^{-1}(0)$ with respect to $\Phi$ (\cite[Thm~3.2]{wilson1969smooth}, \cite[\S 6]{fathi2019smoothing}, Remark~\ref{rem:c1-vs-cinf}).
Since $V$ is proper, there is $c\in (0,\infty)$ such that $V^{-1}([0,c])\subset U$.
A standard implicit function theorem argument implies that $N\coloneqq V^{-1}(c)\subset U$ is a $C^k$ embedded submanifold and the ``time-to-impact-$N$'' map $\tau\colon \st\setminus \att\to (-\infty,\infty)$ is $C^k$.
Consider $F_0\colon \st\to \R^\ell$ given by 
\begin{equation*}
F_0(x)\coloneqq \begin{cases}
G(x), & x\in U\\
e^{-B\tau(x)}G(\Phi^{\tau(x)}(x)), & x\in \st\setminus \att
\end{cases}
\end{equation*}
and note that $F_0$ is well-defined, since if $x\in U\setminus \att $ then   
$$e^{-B\tau(x)}G(\Phi^{\tau(x)}(x)) = e^{-B\tau(x)}e^{B\tau(x)}G(x)=G(x) $$
by Condition~\ref{item:smooth-3}.
Since $U$ and $\st\setminus \att$ are open, $F_0\colon \st\to \R^\ell$ is $C^k$.

By Condition~\ref{item:smooth-2} and Theorem~\ref{th:smooth-compact}, there exists  $\ell_1\in \N$, $B_1\in \R^{\ell_1\times \ell_1}$, and 
a $C^k$ embedding $F_1\colon \att\to \R^{\ell_1}$ that linearizes $(\att,\Phi|_{\R\times \att})$.
Define the $C^k$ map $F\colon \st\to \R^{\ell_1}\times \R^\ell$ by $F(x)\coloneqq (F_1\circ P(x), F_0(x)).$
Since
$$\ker\T F= \ker \T(F_1\circ P)\cap \ker \T F_0= \ker \T P \cap \ker \T F_0 $$
and the intersection of the rightmost term with $\T_\att \st$ is equal to $\ker T_\att P \cap \T A = 0_{\T_\att \st}$ by Condition~\ref{item:smooth-3}, continuity implies that there is an open set $W_0\supset \att$ such that $\ker(\T_{W_0}F)=0_{\T_{W_0}\st}$ and hence $F|_{W_0}$ is an immersion.

Since $F|_{W_0}$ is an immersion and $F|_{\att}$ is injective, there is an open set $W\supset A$ contained in $W_0$ such that the restriction $F|_W$ is injective.
To show that $F$ itself is injective, fix $x,y\in \st$ such that $F(x)=F(y)$.
We will show that $x=y$ using injectivity of $F|_W$ and the fact that 
\begin{equation}\label{eq:F-conj-pf}
F\circ \Phi^t = \exp\left(\begin{bmatrix}
B_1 & 0\\
0 & B
\end{bmatrix}t \right) \circ F
\end{equation}
for all $t\in \R$, which holds since $$F_1\circ P\circ \Phi^t = F_1\circ \Phi^t\circ P = e^{B_1 t}\circ F_1\circ P,$$
$$F_0\circ \Phi^t|_\att = G\circ \Phi^t|_\att = e^{Bt}\circ G|_\att = e^{Bt}\circ F_0|_\att,$$
and, for all $x\in \st \setminus \att$,
\begin{align*}
F_0\circ \Phi^t(x)&= e^{-B\tau\circ \Phi^t(x)}G \circ \Phi^{\tau \circ \Phi^t(x)}\circ \Phi^t(x)\\
&= e^{-B(\tau(x)-t)}\circ G\circ \Phi^{\tau(x)-t}\circ \Phi^t(x)	\\
&= e^{Bt}e^{-B\tau(x)}G(\Phi^{\tau(x)}(x))\\
&= e^{Bt}F_0(x),
\end{align*}
where we have used that $\tau\circ \Phi^t|_{\st\setminus \att}=\tau - t$.
Global asymptotic stability of $\att$ implies the existence of $s>0$ such that $\Phi^s(x),\Phi^s(y)\in W$.
Since $F(x)=F(y)$ and \eqref{eq:F-conj-pf} imply that $F|_W( \Phi^s(x)) = F|_W(\Phi^s(y))$, injectivity of $F|_W$ and $\Phi^s$ implies that $x=y$.
Thus, $F$ is injective.
Similarly, taking tangent maps of both sides of \eqref{eq:F-conj-pf} reveals that $F$ is an immersion.
Since $F$ is also a proper map by Theorem~\ref{th:cont}, $F$ is a $C^k$ embedding \cite[Cor.~4.97]{lee2011topological} that linearizes $(\st,\Phi)$ by \eqref{eq:F-conj-pf}.
\end{proof}

\section{Conclusion}\label{sec:conclusion}
We obtained necessary and sufficient conditions for linearizability of $(\st,\Phi)$ by a $C^{k\geq 0}$ embedding when $\st$ is either compact or  $\Phi$ has a compact global attractor, where $\st$ is a $C^k$ manifold if $k\in \N_{\geq 1}\cup\{\infty\}$ and  $\st$ is a ``reasonable'' topological space if $k=0$ (Theorems~\ref{th:smooth-compact}, \ref{th:cont-compact}, \ref{th:cont}, \ref{th:smooth}).
We also obtained a separate sufficient condition for $C^{k\geq 1}$ linearizability in the compact case (Proposition~\ref{prop:recog}), a necessary condition for the same (Proposition~\ref{prop:hopf}) having implications for linearizability in the presence of isolated equilibria (Corollary~\ref{co:odd-isolated-no-embed}, \ref{co:euler}, \ref{co:surfaces}), a necessary condition for $C^0$ linearizability (Corollary~\ref{co:motivate}), and extensions of the classical Hartman-Grobman and Floquet normal form theorems (Corollaries~\ref{co:hartman-grobman}, \ref{co:floquet}).
Additionally, we illustrated the theory in several examples (Examples~\ref{ex:zero-vf}, \ref{ex:basic-tori}, \ref{ex:sphere}, \ref{ex:klein}, \ref{ex:rp2}, \ref{ex:stabilize-earlier}, \ref{ex:nonhyp-po-nophase}, \ref{ex:nonresonance}). 

In particular, our results completely characterize linearizability by $C^{k\geq 0}$ embeddings for flows on connected state spaces that are either compact or contain at least one nonempty compact attractor (Theorems~\ref{th:smooth-compact}, \ref{th:cont-compact}, \ref{th:cont}, \ref{th:smooth}, Corollary~\ref{co:motivate}).
While our results still furnish \emph{necessary} conditions for the remaining case of a noncompact state space without any nonempty compact attractors, it would be interesting to fully characterize this remaining case as well.
Additionally, even in the $C^{k\geq 1}$ global attractor case covered by Theorem~\ref{th:smooth}, the relationship between one of our hypotheses and related literature remains to be fully understood (Remark~\ref{rem:about-cond-3}).

Finally, we note that our results have implications for the field of ``applied Koopman operator theory'', where algorithms like ``extended Dynamic Mode Decomposition'' (\cite{lusch2018deep}, \cite[\S 5.1, 5.4]{brunton2022modern}, \cite{haller2024data}) are used to attempt to numerically compute linearizing embeddings.
Our results give precise conditions under which linearizing embeddings exist, imposing fundamental limitations on such algorithms complementary to practical limitations identified by practitioners  \cite{wu2021challenges,haller2024data}.

\section*{Acknowledgments}
This material is based upon work supported by the Air Force Office of Scientific Research under award number FA9550-24-1-0299, managed by Dr. Frederick A. Leve.
We thank Robert Cardona for communicating to us a generalized version of Tao's universality notion as in \S\ref{sec:intro}, Zexiang Liu and Eduardo Sontag for useful discussions about the relation of our results with those of \cite{liu2023non,liu2025properties}, and  Paulo Tabuada for references to the control theory and polynomial flows literatures.
We are also extremely grateful to Zexiang Liu for alerting us to an error in a previous version of Theorem~\ref{th:cont}.

\bibliographystyle{unsrt}
\bibliography{ref}


\end{document}